\documentclass[12pt]{article}
\usepackage{geometry}                
\geometry{letterpaper}                   
\usepackage{graphicx}
\usepackage{amssymb}
\usepackage{amsmath}
\usepackage{mathtools}
\usepackage{mathrsfs}
\usepackage{xcolor}
\usepackage{amsthm}
\usepackage{hyperref}
\DeclareGraphicsRule{.tif}{png}{.png}{`convert #1 `dirname #1`/`basename #1 .tif`.png}

\newcommand*{\FF}{\mathbb{F}}
\newcommand*{\NN}{\mathbb{N}}
\newcommand*{\ZZ}{\mathbb{Z}}

\newcommand*{\RR}{\mathbb{R}}

\newcommand*{\calO}{\mathcal{O}}
\newcommand*{\calU}{\mathcal{U}}

\newcommand*{\tail}{\mathscr{T}}
\newcommand*{\scrS}{\mathscr{S}}

\newcommand*{\A}{\mathtt{A}}
\newcommand*{\ta}{\mathtt{a}}

\newcommand*{\tb}{\mathtt{b}}

\newcommand*{\1}{\mathbf{1}}

\newcommand*{\mb}[1]{\mathbf{#1}}
\newcommand*{\bOmega}{\mb{\Omega}}

\newcommand{\PA}{\textsc{pa}}

\newcommand*{\st}{\,:\,}

\newcommand*{\ball}[3][\relax]{\mathrm{B}^{#1}(#2, #3)}

\newtheorem{lemma}{Lemma}[section]
\newtheorem{cor}[lemma]{Corollary}
\newtheorem{prop}[lemma]{Proposition}
\newtheorem{theorem}[lemma]{Theorem}

\newtheorem{mainthm}{Theorem}

\theoremstyle{definition}

\DeclareMathOperator{\Prob}{Prob}

\DeclareMathOperator{\Unif}{Unif}

\DeclareMathOperator{\Hom}{Hom}

\DeclareMathOperator{\Sym}{Sym}

\DeclareMathOperator{\Lip}{Lip}

\DeclareMathOperator{\TV}{TV}

\DeclareMathOperator{\shent}{H}
\DeclareMathOperator{\fed}{a}

\DeclarePairedDelimiter{\abs}{\lvert}{\rvert}

\DeclarePairedDelimiter{\norm}{\|}{\|}
\newcommand{\nnorm}[1]{{\left\vert\kern-0.25ex\left\vert\kern-0.25ex\left\vert #1 
    \right\vert\kern-0.25ex\right\vert\kern-0.25ex\right\vert}}
\DeclarePairedDelimiterX{\inprod}[2]{\langle}{\rangle}{#1,\ #2}

\title{Free Energy, Gibbs Measures, and Glauber Dynamics for Nearest-neighbor Interactions on Trees}
\author{Christopher Shriver}

\begin{document}
\maketitle

\begin{abstract}
We extend results of R.~Holley beyond the integer lattice to a large class of groups which includes free groups. In particular we show that a shift-invariant measure is Gibbs if and only if it is Glauber invariant. Moreover, any shift-invariant measure converges weakly to the set of Gibbs measures when evolved under Glauber dynamics. These results are proven using a new notion of free energy density relative to a sofic approximation by homomorphisms. Any measure which minimizes free energy density is Gibbs.
\end{abstract}

\section{Introduction, Main Results}

A prototypical example of the type of system we study here is the Ising model, an old and well-studied model of magnetism. In this model we have a rectangular grid of particles, each of which can have `spin' either $+1$ or $-1$. Each particle interacts only with its nearest neighbors: neighboring particles with opposite spins increase the energy and neighboring particles with the same spin decrease the energy. The system prefers to be in a low-energy state. \\

The rectangular grid is natural for modeling an arrangement of particles in euclidean space. However, it is also natural to study systems with similar pairwise interactions between particles but other dependence structures. One important feature of the rectangular lattice $\ZZ^r$ that we would like to keep is its notion of `translation.' It is also important that each vertex has only finitely many neighbors. To preserve these features we generalize by replacing the rectangular lattice with the Cayley graph of a finitely-generated group. Some of our results will require the additional assumption that the group has ``property \PA'' (a definition is given in Section \ref{sec:freeenergy}).

In this setting we can model an infinite regular tree as the graph of the free group $\FF_r$ (which produces a $2r$-regular tree) or of the free product of cyclic groups $(\ZZ_2)^{*r}$ (which produces an $r$-regular tree). Note that these result in different notions of translation. \\

%
%
%
%
%

In the present paper we focus on extending results of Holley in \cite{holley1971}. He studied a natural notion of free energy density for systems with sites indexed by $\ZZ^r$, and used it to relate Gibbs measures and Glauber dynamics. His approach does not seem to work for nonamenable groups due to non-negligibility of the boundary of large finite subsystems.

In its place we use an ``extrinsic'' approach to free energy density which is inspired by recent work on the entropy theory of nonamenable group actions, initiated by Lewis Bowen \cite{bowen2010b} to solve similar problems which appear in that area. 

\subsection{Related work}

In one respect, Holley \cite{holley1971} worked in slightly more generality than we do here: he considered finite-range interactions, not just nearest-neighbor interactions. Higuchi and Shida \cite{higuchi1975} extended his results to spin systems on $\ZZ^r$ which may have infinite-range interactions, but the strength of the interactions is assumed to decay sufficiently quickly.

The method of the present paper may be compatible with such generalizations, but for the sake of simplicity we choose not to pursue them here.

More recently, Jahnel and K\"ulske \cite{jahnel2019} have extended the free energy density approach to non-reversible dynamics on integer-lattice systems. \\

Caputo and Martinelli \cite{caputo2006} have shown that if we evolve the product of plus-biased Bernoulli measures by Ising Glauber dynamics on an infinite tree, then it converges weakly to the ``plus boundary conditions'' Gibbs measure. \\

There has been some other work on notions of free energy density for Ising models on nonamenable groups, but these notions do not appear to have the properties we want for our present purposes. Dembo and Montanari \cite{dembo2010} consider, as we do below, a sequence of finite graphs that locally converge to an infinite tree. Their work differs from ours in that they study the limiting free energy density of the (unique) Gibbs measures on these finite graphs, while we study the free energy density of finitary measures which are locally consistent with a chosen infinitary measure (which is not necessarily Gibbs).

\subsection{Precise statements of basic definitions and main theorems}

Let $\Gamma$ be a countable group with $r$ generators $s_1, \ldots, s_r$ and arbitrary relations. Let $e \in \Gamma$ denote the identity. We will identify $\Gamma$ with its left Cayley graph, which has vertex set $\Gamma$ and an $i$-labeled directed edge $(\gamma, s_i \gamma)$ for every $i \in [r]$ and $\gamma \in \Gamma$.

For some finite alphabet $\A$, we define the action of $\Gamma$ on $\A^\Gamma$ by
	\[ (\beta \mb{y}) (\gamma) = \mb{y}(\gamma \beta) \]
for $\beta, \gamma \in \Gamma$. We can think of this as moving the center of the labeling to $\beta^{-1}$. We say that a measure $\mu \in \Prob(\A^\Gamma)$ is shift-invariant if $\beta_*\mu = \mu$ for any $\beta \in \Gamma$, where $\beta_*$ denotes the pushforward. We denote the set of shift-invariant probability measures by $\Prob^\Gamma (\A^\Gamma)$.

If $V$ is a finite set, we can consider the set $\Hom(\Gamma, \Sym(V))$ of homomorphisms from $\Gamma$ to the group of permutations of $V$. It is possible for this set to be empty. Given $\sigma \in \Hom(\Gamma, \Sym(V))$, we write the permutation which is the image of $\gamma \in \Gamma$ by $\sigma^\gamma$. We can associate to $\sigma$ a directed graph with vertex set $V$ and an $i$-labeled edge $(v, \sigma^{s_i}(v))$ for each $i \in [r]$ and $v \in V$.

The graph of any $\sigma$ can be thought of as a finite system which locally looks like $\Gamma$, just as a large rectangular grid locally looks like the integer lattice $\ZZ^r$.

Either $\Gamma$ or the graph of some $\sigma$ can be endowed with a natural graph distance: the distance between a pair of vertices is defined to be the minimal number of edges in a path between them, ignoring edge directions. Let $\ball[\sigma]{v}{R}$ denote the closed radius-$R$ ball centered at $v \in V$, and similarly define $\ball[\Gamma]{\gamma}{R}$ for $\gamma \in \Gamma$.

The correspondence between finite and infinite systems is established using \emph{empirical distributions}, which we now define. Let $\sigma \in \Hom(\Gamma, \Sym(V))$ and $\mb{x} \in \A^V$. For any $v \in V$ there is a natural way to lift $\mb{x}$ to a labeling $\Pi_v^\sigma \mb{x} \in \A^\Gamma$, starting by lifting $\mb{x}_v$ to the root $e$. More precisely,
	\[ \big( \Pi_v^\sigma \mb{x} \big) (\gamma) = \mb{x}\big( \sigma^\gamma (v) \big). \]
The empirical distribution of $\mb{x}$ is defined by
	\[ P_{\mb{x}}^\sigma = (v \mapsto \Pi_v^\sigma \mb{x})_* \Unif(V) = \frac{1}{\abs{V}} \sum_{v \in V} \delta_{\Pi_v^\sigma \mb{x}} \in \Prob\!\big( \A^\Gamma \big) . \]
This captures the `local statistics' of $\mb{x}$. This notation was used in the approach to sofic entropy in \cite{austin2016}.
	
To state our results we use the following way of measuring local similarity of a finite graph $\sigma$ to $\Gamma$: for $\sigma \in \Hom(\Gamma, \Sym(V))$, we say $\ball[\sigma]{v}{R} \cong \ball[\Gamma]{e}{R}$ if there is an isomorphism of the induced subgraphs $\ball[\sigma]{v}{R}, \ball[\Gamma]{e}{R}$ which respects both edge labels and directions. Define
\begin{align*}
	\delta_R^\sigma &= \frac{1}{\abs{V}} \abs{\{v \in V \st \ball[\sigma]{v}{R} \not\cong \ball[\Gamma]{e}{R}\}} \\
	\Delta^\sigma &= \inf_R \big(9 \cdot (2/3)^{R} + 6\delta_R^\sigma \big).
\end{align*}
The constants which appear here are connected to the choice of metric $d$ in Section~\ref{sec:definitions} below. If $\Delta^\sigma$ is small, then $\sigma$ looks like $\Gamma$ to a large radius near most vertices. This is a way of saying that the action of $\sigma$ is approximately free. Note also that a sequence $\sigma_n$ Benjamini-Schramm converges to the infinite tree $\Gamma$ (or equivalently is a sofic approximation to the group $\Gamma$) if and only if $\Delta^{\sigma_n} \to 0$.

As mentioned above, our central tool is a notion of ``free energy density'' of a measure $\mu \in \Prob(\A^\Gamma)$ with respect to a nearest-neighbor potential. This free energy density is defined relative to a choice of $\Sigma = (\sigma_n \in \Hom(\Gamma, \Sym(V_n))_{n \in \NN}$ with $\Delta^{\sigma_n} \to 0$. It may be $+\infty$, but if it is finite then it is nonincreasing as $\mu$ evolves according to Glauber dynamics (Proposition~\ref{prop:nonincreasing}). Moreover if $\mu$ is not Gibbs then it is strictly decreasing; Proposition~\ref{prop:strictdecrease} gives a stronger version of this claim. For every choice of $\Sigma$ there exist measures with finite free energy density, so this implies the following:

\begin{mainthm}
	For any choice of $\Sigma$, every $\mu \in \Prob(\A^\Gamma)$ minimizing the free energy density relative to $\Sigma$ is Gibbs.
\end{mainthm}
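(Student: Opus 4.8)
The plan is to deduce Theorem~A directly from the three ingredients the introduction has already promised: Proposition~\ref{prop:nonincreasing} (free energy density is nonincreasing under Glauber dynamics whenever it is finite), Proposition~\ref{prop:strictdecrease} (it strictly decreases — quantitatively — when the starting measure is not Gibbs), and the existence, for every $\Sigma$, of at least one $\mu$ with finite free energy density. First I would fix $\Sigma$ and let $F_\Sigma \colon \Prob(\A^\Gamma) \to (-\infty, +\infty]$ denote the free energy density relative to $\Sigma$. Suppose $\mu$ minimizes $F_\Sigma$. By the existence statement, $\inf F_\Sigma < +\infty$, so $F_\Sigma(\mu) < +\infty$; in particular Glauber dynamics is well-defined on $\mu$ and $F_\Sigma$ is finite along the whole trajectory $(\mu_t)_{t \ge 0}$ by Proposition~\ref{prop:nonincreasing}.

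The core of the argument is then a contradiction: assume $\mu$ is not Gibbs. I would invoke Proposition~\ref{prop:strictdecrease} to get that $F_\Sigma(\mu_t) < F_\Sigma(\mu)$ for some $t > 0$ — or, depending on exactly how that proposition is phrased, that the right derivative of $t \mapsto F_\Sigma(\mu_t)$ at $t=0$ is strictly negative, which by the monotonicity from Proposition~\ref{prop:nonincreasing} again yields $F_\Sigma(\mu_t) < F_\Sigma(\mu)$ for small $t > 0$. Either way this contradicts minimality of $\mu$, since $\mu_t$ is again a measure in $\Prob(\A^\Gamma)$ (and shift-invariance, if $F_\Sigma$ is only defined or only finite on $\Prob^\Gamma(\A^\Gamma)$, is preserved by Glauber dynamics because the dynamics commutes with the shift action — a point worth a one-line remark). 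Hence $\mu$ must be Gibbs.

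The one genuine subtlety — and the step I expect to need the most care — is making sure the hypotheses of Propositions~\ref{prop:nonincreasing} and~\ref{prop:strictdecrease} are actually met by a minimizer, rather than merely by the specific finite-energy measures constructed to witness $\inf F_\Sigma < \infty$. This is exactly why the finiteness bookkeeping in the first paragraph matters: a minimizer automatically inherits finite free energy density from the existence result, so the "finite free energy density" hypothesis of both propositions is satisfied for free, and no separate regularity argument is needed. Beyond that, the proof is a short syllogism, and I do not anticipate further obstacles; all the analytic weight has been front-loaded into the two propositions and the existence statement, whose proofs presumably occupy Section~\ref{sec:freeenergy} and the sections that develop the extrinsic free energy density.
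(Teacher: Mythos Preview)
Your proposal is correct and matches the paper's own argument, which is not even given a separate proof section: the paper simply states in the introduction that Propositions~\ref{prop:nonincreasing} and~\ref{prop:strictdecrease}, together with the existence of measures of finite free energy density (e.g.\ the point mass at a constant labeling), immediately yield Theorem~A. Your handling of the shift-invariance issue is also right and consistent with the paper's remark that non-shift-invariant measures have $\fed_\Sigma = +\infty$, so any minimizer is automatically in $\Prob^\Gamma(\A^\Gamma)$ and Proposition~\ref{prop:strictdecrease} applies.
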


The converse is false, since a Gibbs measure may have free energy density $+\infty$ with respect to some $\Sigma$. It is unclear whether a Gibbs measure may have finite but non-minimal free energy density.

\begin{mainthm}
\label{thm:main}
	Suppose $\mu \in \Prob^\Gamma(\A^\Gamma)$, and let $\mu_t$ denote its evolution under Glauber dynamics. If there exist $s \geq 0$ and $\Sigma$ such that $\fed_\Sigma(\mu_s) < +\infty$, then $\mu_t$ converges weakly to the set of Gibbs measures as $t \to \infty$.
\end{mainthm}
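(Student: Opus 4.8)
The plan is to combine the two "free energy" propositions with a compactness argument on the space of shift-invariant measures. The key obstacle is that $\fed_\Sigma$ need not be lower semicontinuous (or even continuous) for the weak topology, so one cannot naively pass to limits; instead one exploits monotonicity along the dynamics to control the free energy of all subsequential limits.

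First I would record the basic dynamical facts. By assumption $\fed_\Sigma(\mu_s) < +\infty$ for some $s \geq 0$; replacing $\mu$ by $\mu_s$ we may assume $\fed_\Sigma(\mu) < +\infty$, and it suffices to show $\mu_t$ converges weakly to the set $\Gibbs$ of Gibbs measures. By Proposition~\ref{prop:nonincreasing} the function $t \mapsto \fed_\Sigma(\mu_t)$ is nonincreasing and bounded below (free energy density is bounded below for a fixed finite-range potential), hence converges to some limit $\fed_\infty \in \RR$. Moreover $\mu_t \in \Prob^\Gamma(\A^\Gamma)$ for all $t$ since Glauber dynamics commutes with the shift; as $\Prob^\Gamma(\A^\Gamma)$ is weak$^*$-compact and metrizable, it is enough to show that every weak$^*$ subsequential limit of $(\mu_t)_{t \to \infty}$ lies in $\Gibbs$.

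Next I would set up the limiting argument. Suppose $\mu_{t_k} \to \nu$ weakly along some sequence $t_k \to \infty$. I want to show $\nu$ is Gibbs. The idea is to "run the dynamics a little further": fix $h > 0$ and consider $\mu_{t_k + h} = (\text{Glauber flow for time } h)(\mu_{t_k})$. Since the Glauber semigroup is Feller (the generator involves only single-site flips with rates depending on finitely many coordinates), $\mu_{t_k + h} \to \nu_h$ weakly, where $\nu_h$ is the time-$h$ evolution of $\nu$. Now I would apply the strict-decrease estimate. Proposition~\ref{prop:strictdecrease} should give, for some nonnegative functional $D(\cdot)$ vanishing exactly on Gibbs measures and controlled by the weak topology, an inequality of the form
\[ \fed_\Sigma(\mu_{t_k + h}) \leq \fed_\Sigma(\mu_{t_k}) - \int_{t_k}^{t_k + h} D(\mu_u)\, du. \]
Since the left side is $\geq \fed_\infty$ and the right side has $\fed_\Sigma(\mu_{t_k}) \to \fed_\infty$, the integral term tends to $0$ as $k \to \infty$. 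Choosing times within $[t_k, t_k+h]$ where the integrand is small and using continuity/quantitative structure of $D$, I would conclude that $\nu_u$ is Gibbs for (a.e., hence by Feller continuity every) $u \in [0,h]$; in particular $\nu = \nu_0$ is Gibbs. (Alternatively: if $\nu$ were not Gibbs, then $D(\nu) > 0$, hence $D(\nu_u) \geq c > 0$ on a neighborhood $u \in [0,h_0]$ by Feller continuity of the flow and the relevant semicontinuity of $D$, forcing $\int_{t_k}^{t_k+h_0} D(\mu_u)\,du \to \int_0^{h_0} D(\nu_u)\,du \geq c h_0 > 0$, a contradiction.)

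The main obstacle is making the passage to the limit in the displayed inequality rigorous: one must know that the "dissipation" functional $D$ appearing in Proposition~\ref{prop:strictdecrease} interacts well with weak convergence — specifically that $D$ is lower semicontinuous, or at least that $\mu_{t_k} \to \nu$ with $D(\mu_{t_k}) \to 0$ forces $D(\nu) = 0$ — and that "$D(\nu) = 0$" is genuinely equivalent to "$\nu$ is Gibbs" (this is exactly the content one expects Proposition~\ref{prop:strictdecrease} to deliver, phrased via the Glauber generator annihilating local test functions, i.e.\ a DLR-type characterization). A secondary technical point is justifying the Fubini-type integrated form of the strict-decrease proposition and the use of its quantitative dependence on how far $\mu$ is from $\Gibbs$; if Proposition~\ref{prop:strictdecrease} is stated only infinitesimally (as a bound on $\frac{d}{dt}\fed_\Sigma(\mu_t)$), I would integrate it over $[t_k, t_k + h]$ first. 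Granting those ingredients, the compactness-plus-monotonicity scheme above closes the argument.
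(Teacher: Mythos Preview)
Your overall strategy---assume a non-Gibbs subsequential limit $\nu$, invoke the strict-decrease proposition near $\nu$, and contradict the monotonicity/boundedness of $t \mapsto \fed_\Sigma(\mu_t)$---is exactly the paper's. The difference is that you guess Proposition~\ref{prop:strictdecrease} delivers a pointwise dissipation functional $D(\cdot)$ and then worry about its lower semicontinuity and about passing limits through an integrated inequality; none of this is needed. The proposition is already stated in ``neighborhood form'': it gives an open $\calO \ni \nu$ and constants $\delta, T > 0$ such that \emph{every} $\mu_0 \in \calO$ satisfies $\fed_\Sigma(\mu_0) \geq \fed_\Sigma(\mu_t) + \delta t$ for $t \in [0,T]$. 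With that in hand the proof is two lines: choose times $t_k \to \infty$ with $\mu_{t_k} \in \calO$ and $t_{k+1} \geq t_k + T$; then monotonicity (Proposition~\ref{prop:nonincreasing}) chains the drops to give $\fed_\Sigma(\mu_{t_1}) \geq \fed_\Sigma(\mu_{t_N}) + (N-1)\delta T$ for every $N$, contradicting the uniform lower bound $u^{\min} - \log\lvert\A\rvert$. (Incidentally, $\fed_\Sigma$ \emph{is} lower semicontinuous---it is a supremum over neighborhoods---though this plays no role in the argument.)
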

It is possible to avoid the degenerate case of infinite free energy density by an appropriate choice of $\Sigma$  when $\Gamma$ has a property called ``property \PA''; see Section \ref{sec:freeenergy} below for a definition and the relevant result (Proposition~\ref{prop:PAequiv}). Hence we have the following:
\begin{cor}
	If $\Gamma$ has property \PA, then a shift-invariant measure is Gibbs if and only if it is Glauber-invariant.
\end{cor}


\subsection{Acknowledgements}

Thanks to Tim Austin for many helpful conversations and for providing feedback on a preprint, and to Lewis Bowen for telling me about property \PA\ and its relation to sofic approximations. Thanks also to Raimundo Brice\~no for helpful correspondence regarding the Gibbness of free energy minimizers.

This material is based upon work supported by the National Science Foundation under Grant No.~DMS-1855694.

\section{Definitions}
\label{sec:definitions}

For $\gamma \in \Gamma$, let $\abs{\gamma}$ denote the graph distance between $\gamma$ and the identity $e \in \Gamma$.

Give $\A^\Gamma$ the metric
	\[ d(\mathbf{x},\mathbf{y}) = \sum_{\gamma \in \Gamma} (3r)^{-\abs{\gamma}} \1_{\mathbf{x}(\gamma) \ne \mathbf{y}(\gamma)} ; \]
the factor 3 is chosen to ensure convergence.
Let $\bar{d}$ denote the corresponding transportation metric on $\Prob(\A^\Gamma)$; specifically, with $\Lip_1(\A^\Gamma)$ denoting the set of 1-Lipschitz real-valued functions, we define
	\[ \bar{d}(\mu, \nu) = \sup\left\{ \abs*{ \mu f - \nu f} \st f \in \Lip_1(\A^\Gamma) \right\} . \]
Here $\mu f$ denotes the integral of $f$ with respect to $\mu$.
The sum defining $d$ always converges because every vertex of $\Gamma$ has at most $2r$ neighbors. Note that $d$ generates the product topology (which is compact), and $\bar{d}$ generates the weak topology induced by the pairing with continuous functions (which is also compact). \\

For any set $V$ and any $\mb{x} \in \A^V$, $v \in V$, $\ta \in \A$ we let $\mb{x}^{v \to \ta} \in \A^V$ be given by
	\[ \mb{x}^{v \to \ta}(w) = \left\{
						\begin{array}{ll}
							\mb{x}(w), 	& w \ne v \\
							\ta,			& w = v .
						\end{array} \right. \]
Below, an element of $\A^V$ will be referred to as a \emph{microstate} and an element of $\Prob(\A^V)$ as a \emph{state}.

\subsection{Glauber dynamics}

Let $V$ be an at most countable set and fix $\sigma \in \Hom(\Gamma, \Sym(V))$. We will apply this in two cases: when $V$ is finite, and when $V = \Gamma$ and $\sigma$ is the action of $\Gamma$ on itself by right multiplication. We will distinguish between these cases by giving notation superscripts of $\sigma$ or $\Gamma$ respectively (e.g. $\Omega^\sigma$ versus $\Omega^\Gamma$).

For simplicity we consider only nearest-neighbor interactions. Fix a symmetric function $J \colon \A^2 \to \RR$ and a function $h \colon \A \to \RR$, and let $S = \{ s_1, \ldots, s_r, s_1^{-1}, \ldots, s_r^{-1}\}$.
For $v \in V$, $\mb{x} \in \A^V$, let
	\[ \Phi_v(\mb{x}) = h(\mb{x}(v)) + \sum_{s \in S} J(\mb{x}(v), \, \mb{x}(\sigma^s v)) , \]
and for $\ta \in \A$ let
	\[ c_v(\mb{x}, \ta) = Z_v(\mb{x})^{-1} \exp \!\left\{ - \Phi_v(\mb{x}^{v \to \ta}) \right\} , \]
where $Z_v(\mb{x})$ is the normalizing factor which makes $c_v(\mb{x}, \cdot)$ a probability measure on $\A$. Note that $c_v(\mb{x}, \ta)$ only depends on $\mb{x}$ through its values at vertices adjacent to $v$.
A standard stochastic Ising model has $\A = \{-1, 1\}$ and $J(\ta, \tb) = \beta \ta \tb$ for some $\beta \geq 0$ (the inverse temperature), and $h$ represents an ambient electric field.

We study here the continuous-time Markov process with state space $\A^V$ and generator given by
	\[ \Omega f (\mb{x}) = \sum_{v \in V} \sum_{\ta \in \A} c_v(\mb{x}, \ta) [f(\mb{x}^{v \to \ta}) - f(\mb{x})] . \]
If $V$ is finite then this gives a well-defined linear operator on $C(\A^V)$. Otherwise we need to first define $\Omega$ using the above formula on a `core' of `smooth' functions for which the sum converges, then take the closure of $\Omega$; see Liggett's book \cite{liggett2005} for details. We denote the induced Markov semigroup by $\{S(t) \st t \geq 0\}$.

For any continuous function $f \colon \A^V \to \RR$ and $\mb{x} \in \A^V$, we interpret $[S(t) f](\mb{x})$ as expected value of $f(\mb{x}_t)$, where the random variable $\mb{x}_t$ is the evolution of $\mb{x}$ to time $t$. The action of the semigroup on a probability measure is denoted $\mu S(t)$; this is interpreted the evolution of a probability measure $\mu \in \Prob(\A^V)$ to time $t$. The evolved measure is defined via the formula $[\mu S(t)] f \coloneqq \mu [S(t) f]$.\\

Further details of the construction of the dynamics will only be needed for proofs of the following two results. The relevant details are contained in Section \ref{sec:infdynamics}.

The first result may be thought of as an approximate equivariance between the Glauber semigroups and the empirical distribution:

\begin{theorem}
\label{thm:Sequivariance}
There is a constant $M > 0$ such that for any $\mb{x} \in \A^V$, $\sigma \in \Hom(\Gamma, \Sym(V))$, and $t\geq 0$
	\[ \bar{d}\left(S^\sigma(t) P_{\mb{x}}^\sigma,\ P_{\mb{x}}^\sigma S^\Gamma(t) \right) \leq \Delta^\sigma \cdot t e^{Mt} . \]
\end{theorem}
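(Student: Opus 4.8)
The plan is to compare the two semigroups by a coupling / generator-difference argument, exploiting the fact that $P^\sigma_{\mb x}$ is a finite average of Dirac masses $\delta_{\Pi^\sigma_v \mb x}$, so everything reduces to tracking, for each base vertex $v\in V$, how the $\sigma$-dynamics on $\mb x$ near the $\sigma$-orbit of $v$ differs from the $\Gamma$-dynamics on the lift $\Pi^\sigma_v\mb x$. The key observation is that the rate function $c_w(\cdot,\ta)$ depends only on values at neighbors, so as long as $\ball[\sigma]{v}{R}\cong\ball[\Gamma]{e}{R}$, the two processes can be coupled to agree on the ball of radius $R-1$ (say) for a time that is long unless an update somewhere inside forces a discrepancy; discrepancies propagate at a bounded speed because each site's clock rings at rate $O(1)$ and only affects neighbors. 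I would first set up the standard graphical construction of both $S^\sigma(t)$ and $S^\Gamma(t)$ on a common probability space (Poisson clocks at each vertex together with i.i.d. uniform marks used to resolve the update), so that $P^\sigma_{\mb x} S^\Gamma(t)$ is realized as the empirical distribution of the $\Gamma$-evolutions of the lifts, and $S^\sigma(t)P^\sigma_{\mb x}$ as the empirical distribution of the lifts of the $\sigma$-evolution.

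Next I would do the generator computation that produces the factor $te^{Mt}$. Writing $g_t = S^\Gamma(t)f$ for $f\in\Lip_1$, one has $\tfrac{d}{dt}\big(S^\sigma(t)(P^\sigma_{\mb x}\text{-pushforward of }g_{T-t})\big)$ telescoping into an integral of $(\Omega^\sigma - \widetilde{\Omega}^\Gamma) g_{T-t}$ terms, where $\widetilde\Omega^\Gamma$ is the ``pulled-back'' $\Gamma$-generator acting through the lift maps. The difference $\Omega^\sigma - \widetilde\Omega^\Gamma$ evaluated at $\Pi^\sigma_v\mb x$ is supported on those $v$ whose neighborhood in $\sigma$ is not isomorphic to the corresponding neighborhood in $\Gamma$ out to the relevant radius — this is exactly what $\delta^\sigma_R$, and after optimizing in $R$ the quantity $\Delta^\sigma$, counts — together with an $O((2/3)^R)$ error coming from the fact that $\Lip_1$ functions for the metric $d$ are only approximately local (the weights $(3r)^{-|\gamma|}$ give geometric decay, and the constants $9$, $6$ in the definition of $\Delta^\sigma$ are tuned precisely to dominate these two error sources). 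The Grönwall step then needs a bound on $\|\nabla g_t\|$ in a suitable sense: one must show $S^\Gamma(t)$ maps $\Lip_L$ into $\Lip_{Le^{Mt}}$ for the metric $d$, with $M$ depending only on $r$ and $\sup|J|,\sup|h|$ through the rates $c_v$. This Lipschitz-expansion estimate for the infinite-volume semigroup is standard (it is the content of the ``$M_3(t)$'' type bounds in Liggett) and I would cite Section \ref{sec:infdynamics}; combining it with the per-step error $\Delta^\sigma$ and integrating over $[0,t]$ yields the claimed $\Delta^\sigma\cdot t e^{Mt}$.

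The main obstacle I anticipate is making the ``approximate equivariance of generators'' rigorous in infinite volume, i.e. controlling $(\Omega^\sigma - \widetilde\Omega^\Gamma)g$ when $g$ is only in the domain of the closure of $\Omega^\Gamma$ rather than in the smooth core. One has to justify that it suffices to verify the inequality on a core of tame (finitely-supported-dependence) functions and then pass to the limit using density in $\Lip_1$ and the Feller property; this is where the details of Section \ref{sec:infdynamics} are essential, and where the interchange of the time-derivative, the infinite sum over $v\in V_n$ (or $\gamma\in\Gamma$), and the closure operation must be handled carefully. A secondary technical point is bookkeeping the radius $R$: since $\Delta^\sigma$ is an infimum over $R$, one fixes an arbitrary $R$, runs the whole argument with error $9(2/3)^R + 6\delta^\sigma_R$ in place of $\Delta^\sigma$, and takes the infimum at the end — so the coupling and the locality estimates must be uniform in $R$, which they are because the propagation-speed and Lipschitz bounds depend only on the rates, not on $R$.
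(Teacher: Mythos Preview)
Your proposal is essentially correct but follows a different route from the paper. Both arguments rest on the same two ingredients: an approximate equivariance of the generators, namely a bound of the form
\[
\bigl\lvert \Omega^\sigma P^\sigma_{\mb x} g - P^\sigma_{\mb x}\Omega^\Gamma g \bigr\rvert \leq (9\cdot(2/3)^R + 6\delta_R^\sigma)\,\abs{g}_{\Lip},
\]
and a Lipschitz-growth estimate $\abs{S^\Gamma(s)f}_{\Lip}\le e^{Ms}\abs{f}_{\Lip}$ (this is Lemma~\ref{lem:misc}). Where you differ is in how these are assembled. You interpolate directly at the semigroup level: set $F(s)=S^\sigma(t-s)P^\sigma_{\mb x}S^\Gamma(s)f$, differentiate, bound $F'(s)$ by the generator-difference estimate applied to $g=S^\Gamma(s)f$, and integrate to get $\int_0^t \Delta^\sigma e^{Ms}\,ds\le \Delta^\sigma t e^{Mt}$. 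The paper instead works at the resolvent level: it proves by induction on $m$ that
\[
\bigl\| (I-\lambda\Omega^\sigma)^{-m}P^\sigma_{\mb x}g - P^\sigma_{\mb x}(I-\lambda\bar\Omega^\Gamma)^{-m}g \bigr\|_\infty \leq \lambda\,\Delta^\sigma\abs{g}_{\Lip}\sum_{k=1}^m(1-\lambda M)^{-k},
\]
using the generator estimate once per step together with the resolvent Lipschitz bound $\abs{(I-\lambda\bar\Omega^\Gamma)^{-k}g}_{\Lip}\le(1-\lambda M)^{-k}\abs{g}_{\Lip}$, and then sets $\lambda=t/m$ and passes to the limit via Hille--Yosida.

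Your Duhamel route is slightly more direct and even gives the marginally sharper $(e^{Mt}-1)/M$ in place of $te^{Mt}$, but you have correctly identified its cost: you must know that $S^\Gamma(s)f$ stays in the smooth class $D(\A^\Gamma)$ so that the unclosed $\Omega^\Gamma$ applies. This is true (Lipschitz functions lie in $D(\A^\Gamma)$, and \cite[Theorem~3.9]{liggett2005} shows $D$ is invariant under $S^\Gamma$), so the obstacle you flag is surmountable. The paper's resolvent argument sidesteps this entirely, since $(I-\lambda\bar\Omega^\Gamma)^{-1}$ automatically lands in the domain. The graphical-construction coupling you sketch at the outset is not needed for either argument and can be dropped; the analytic generator comparison already does all the work.
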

It may be helpful to clarify that the first term on the left, $S^\sigma(t) P_{\mb{x}}^\sigma$, is the evolution to time $t$ of the function $P_{\bullet}^\sigma \colon \A^V \to \Prob(\A^\Gamma)$ evaluated at $\mb{x} \in \A^V$. The second term is the evolution of the empirical distribution $P_{\mb{x}}^\sigma$. So this theorem says that the expected empirical distribution after running the finitary dynamics for time $t$ is close to the result of evolving the original empirical distribution for time $t$, as long as $\sigma$ locally looks like $\Gamma$.

We also use the following Lipschitz bound on the Markov semigroup:

\begin{lemma}
\label{lem:misc}
	If $\mu, \nu \in \Prob(\A^\Gamma)$ then
		\[ \bar{d}\big( \mu S^\Gamma(t),\, \nu S^\Gamma(t)\big) \leq \exp(Mt) \, \bar{d}(\mu, \nu) . \]
\end{lemma}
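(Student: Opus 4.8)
The plan is to reduce the estimate, via the dual description of $\bar d$, to the single assertion that the semigroup $S^\Gamma(t)$ carries $\Lip_1(\A^\Gamma)$ into $\Lip_{e^{Mt}}(\A^\Gamma)$ for a suitable constant $M$ (which may be taken to be the one of Theorem~\ref{thm:Sequivariance}). Indeed, suppose we know that $\abs{S^\Gamma(t)f(\mathbf{x}) - S^\Gamma(t)f(\mathbf{y})} \le e^{Mt}\,d(\mathbf{x},\mathbf{y})$ for every $f \in \Lip_1(\A^\Gamma)$ and all $\mathbf{x},\mathbf{y} \in \A^\Gamma$. Then $e^{-Mt}S^\Gamma(t)f$ is again $1$-Lipschitz, and bounded since $\A^\Gamma$ is compact, so for any $\mu,\nu$
\[ \abs*{\mu S^\Gamma(t)f - \nu S^\Gamma(t)f} = \abs*{(\mu - \nu)\bigl(S^\Gamma(t)f\bigr)} \le e^{Mt}\,\bar d(\mu,\nu); \]
taking the supremum over $f \in \Lip_1(\A^\Gamma)$ gives the lemma. (Here I use that $S^\Gamma(t)$ is a Feller semigroup on $C(\A^\Gamma)$, so $S^\Gamma(t)f$ makes sense for every continuous $f$.)

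To establish the Lipschitz bound I would run the basic coupling of two copies of the dynamics using the graphical (Harris) construction that underlies the infinite-volume process in Section~\ref{sec:infdynamics} (see also \cite{liggett2005}). Attach an independent rate-$1$ Poisson clock to each vertex of $\Gamma$ — this is consistent with the generator since $\sum_{\ta} c_v(\mathbf{x},\ta) = 1$, so each site is refreshed at rate $1$ — and run copies $(\mathbf{x}_t)$ and $(\mathbf{y}_t)$ started from $\mathbf{x}$ and $\mathbf{y}$ off the \emph{same} clocks, resampling the two $v$-coordinates jointly from a maximal coupling of $c_v(\mathbf{x}_{t^-},\cdot)$ and $c_v(\mathbf{y}_{t^-},\cdot)$ each time the clock at $v$ rings. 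Put $D_t = \{v \in \Gamma \st \mathbf{x}_t(v) \ne \mathbf{y}_t(v)\}$ and $p_v(t) = \PP(v \in D_t)$. The key point is that $c_v(\,\cdot\,,\ta)$ depends on its argument only through the finitely many coordinates at the neighbours $\sigma^s v$ ($s \in S$) of $v$, and $\A$ is finite, so $c_v$ is Lipschitz in those coordinates for the Hamming metric: there is $\kappa = \kappa(J,h,\A,r)$ with $\norm{c_v(\mathbf{x}_{t^-},\cdot) - c_v(\mathbf{y}_{t^-},\cdot)}_{\TV} \le \kappa\,\abs{\{s \in S \st \sigma^s v \in D_{t^-}\}}$. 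For a maximal coupling the probability that the two resampled coordinates disagree equals this total variation distance, and since $v$ enters or leaves $D$ only when its own clock rings, this yields the differential inequality
\[ \frac{d}{dt}\,p_v(t) \ \le\ -\,p_v(t) \ +\ \kappa \sum_{s \in S} p_{\sigma^s v}(t). \]

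The final step is a Gronwall argument for $\Psi(t) := \EE\,d(\mathbf{x}_t,\mathbf{y}_t) = \sum_{v}(3r)^{-\abs{v}} p_v(t)$. Each vertex has at most $2r$ neighbours, each at graph-distance at least $\abs{v} - 1$ from $e$, so $\sum_{s \in S}(3r)^{-\abs{\sigma^s v}} \le C_0\,(3r)^{-\abs{v}}$ with $C_0 = 6r^2$; multiplying the displayed inequality by $(3r)^{-\abs{v}}$ and summing over $v$ gives $\Psi'(t) \le (\kappa C_0 - 1)\,\Psi(t)$, hence $\Psi(t) \le e^{Mt}\Psi(0) = e^{Mt}\,d(\mathbf{x},\mathbf{y})$ with $M := \kappa C_0$. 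Since $f \in \Lip_1(\A^\Gamma)$,
\[ \abs*{S^\Gamma(t)f(\mathbf{x}) - S^\Gamma(t)f(\mathbf{y})} = \abs*{\EE[f(\mathbf{x}_t) - f(\mathbf{y}_t)]} \le \EE\,d(\mathbf{x}_t,\mathbf{y}_t) = \Psi(t) \le e^{Mt}\,d(\mathbf{x},\mathbf{y}), \]
which is the bound required in the first paragraph. I expect the only genuinely delicate point to be making the infinite-volume graphical construction and the coupling on it rigorous; but this is standard for finite-range, bounded-rate spin systems and is precisely the material developed in Section~\ref{sec:infdynamics}, and the same coupling should in fact also deliver Theorem~\ref{thm:Sequivariance}. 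Everything else is the short computation above, a probabilistic packaging of the classical generator estimates in \cite{liggett2005}.
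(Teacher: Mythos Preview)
Your reduction to the Lipschitz bound $\abs{S^\Gamma(t)f}_{\Lip}\le e^{Mt}\abs{f}_{\Lip}$ is exactly what the paper does, but from that point the arguments diverge. The paper's proof is purely analytic and essentially a two-line citation: it invokes \cite[Theorem~3.9(c)]{liggett2005}, which gives the coordinatewise inequality $\Delta_{S^\Gamma(t)f}\le e^{t\Theta}\Delta_f$ for the operator $\Theta\beta(u)=\sum_v \beta(v)c_u(v)$ on $\RR^\Gamma$, and then applies the identification $\norm{\Delta_g}=\abs{g}_{\Lip}$ (Lemma~\ref{lem:nnormlipbound}) together with the operator-norm bound $\norm{\Theta}\le M$ for the weighted sup norm $\norm{\beta}=\sup_\gamma\abs{\beta(\gamma)}(3r)^{\abs\gamma}$. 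Your route is the probabilistic counterpart: the basic coupling plus a Gronwall estimate on $\Psi(t)=\sum_v(3r)^{-\abs v}p_v(t)$. The two are genuinely equivalent in spirit---your differential inequality for $p_v$ is the dual of Liggett's generator estimate---but the paper's version is shorter, needs no graphical construction, and yields precisely the same constant $M$ used in Theorem~\ref{thm:Sequivariance}, whereas your $M=\kappa C_0$ is a different (though still finite, model-dependent) constant. One small correction: Section~\ref{sec:infdynamics} does \emph{not} develop the graphical construction you appeal to; it works entirely through Hille--Yosida and the generator estimates, so your claim that the coupling is ``precisely the material developed'' there is inaccurate, though the coupling itself is of course standard.
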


\subsection{Gibbs measures}

For a finite graph with vertex set $V$ induced by $\sigma \in \Hom(\Gamma, \Sym(V))$, we define the total energy
	\[ U(\mb{x}) = \sum_{v \in V} h(\mb{x}(v)) + \sum_{v \in V} \sum_{i \in [r]} J(\mb{x}(v), \mb{x}(\sigma^{s_i} v)) . \]
Note that each directed edge appears in the double sum exactly once; in particular, this is not the same as $\sum_v \Phi_v (\mb{x})$, in which directed edges will be counted twice. Note that if we define
	\[ u^{\max} = \max_{\ta \in \A} \left( h(\ta) + r \max_{\tb \in \A} J(\ta, \tb) \right) \]
and
	\[ u^{\min} = \min_{\ta \in \A} \left( h(\ta) + r \min_{\tb \in \A} J(\ta, \tb) \right) \]
then for any $V, \sigma$ and any $\mb{x} \in \A^V$ we have
	\[ u^{\min} \leq \frac{1}{\abs{V}} U(\mb{x}) \leq u^{\max} . \]
	
The finitary Gibbs measure $\xi_V \in \Prob(\A^V)$ is defined by
	\[ \xi_V \{\mb{x}\} = Z_V^{-1} \exp \{ - U(\mb{x}) \}  \]
where $Z_V$ is the normalizing constant. Note that
	\[ \xi_v(\mb{y}(v) = \ta \mid \mb{y}(w) =\mb{x}(w)\ \forall w \ne v) = \frac{\exp\{-U(\mb{x}^{v \to \ta})\}}{\sum_{\tb \in \A} \exp\{-U(\mb{x}^{v \to \tb})\}} = c_v(\mb{x}, \ta) , \]
since
	\[ \frac{\exp\{-U(\mb{x}^{v \to \ta})\}}{\exp\{-U(\mb{x}^{v \to \tb})\}} = \frac{\exp\{-\Phi_v(\mb{x}^{v \to \ta})\}}{\exp\{-\Phi_v(\mb{x}^{v \to \tb})\}} . \]

On the infinite graph $\Gamma$ we must use a different approach, since the sum defining the total energy will not converge. We use a natural generalization of \cite[Definition IV.1.5]{liggett2005}; see also \cite{georgii2011} for a much more general treatment of infinite-volume Gibbs measures.

Let $\tail_\gamma$ denote the $\sigma$-algebra generated by all vertices except for $\gamma$. We call $\mu \in \Prob(\A^\Gamma)$ a Gibbs measure if for each $\gamma \in \Gamma$ and $\ta \in \A$, the function $\mb{y} \mapsto c_\gamma(\mb{y}, \ta)$ is a version of the conditional expectation $\mu(\{\mb{x} \st \mb{x}(\gamma) = \ta\} \mid \tail_v)(\mb{y})$. This means that for every integrable $f \colon \A^\Gamma \to \RR$ and $\gamma \in \Gamma$ we have
	\[ \int \sum_{\ta \in \A} c_\gamma( \mb{x}, \ta) f(\mb{x}^{\gamma \to \ta}) \mu(d\mb{x}) = \int f(\mb{x})\, \mu(d\mb{x}) . \]
We may also describe this relation by saying that $\mu$ is invariant under re-randomizing the spin at $\gamma$ using the kernel $c_\gamma$.

In particular, if $\mu$ is Gibbs then for any `smooth' $f$
	\[ \mu \Omega^\Gamma f = \int \left( \sum_{\gamma,\ta} c_\gamma( \mb{x}, \ta) [ f(\mb{x}^{\gamma \to \ta}) - f(\mb{x})]\right) \mu(d\mb{x}) = 0 . \]
It follows that $\mu \Omega^\Gamma = 0$, which means Gibbs measures are Glauber-invariant. \\


\subsection{Good models for measures on $\A^\Gamma$}

Let $V$ be a finite set and let $\sigma \in \Hom(\Gamma, \Sym(V))$. A labeling $\mb{x} \in \A^V$ is said to be a \emph{good model} for $\mu \in \Prob(\A^\Gamma)$ over $\sigma$ if the empirical distribution $P_{\mb{x}}^\sigma$ is close to $\mu$ in the weak topology. More precisely, we can say $\mb{x}$ is $\calO$-good if $P_{\mb{x}}^\sigma \in \calO$ for some weak-open neighborhood $\calO \ni \mu$. The set of such $\mb{x}$ is denoted $\Omega(\sigma, \calO)$. An interpretation of this relationship is that average local quantities of the finite system are consistent with $\mu$.


We define the empirical distribution of a state $\zeta \in \Prob(\A^V)$ by
	\[ P_\zeta^\sigma \coloneqq \zeta P_{\mb{x}}^\sigma \left( = \int P_{\mb{x}}^\sigma\, \zeta(d\mb{x}) \right) \]
and say that $\zeta$ is $\calO$-consistent with $\mu$ (for some neighborhood $\calO\ni \mu$) over $\sigma$ if $P_\zeta^\sigma \in \calO$. We can still interpret this in terms of averages of local quantities: now the average also involves a random microstate $\mb{x}$ with law $\zeta$. We denote the set of such states by $\bOmega(\sigma, \calO)$. This measure essentially appears in the notion of ``local convergence on average'' introduced in \cite[Definition 2.3]{montanari2012}.



This consistency is stable under Glauber dynamics in the following sense:

\begin{prop}
\label{prop:stability}
	Suppose $\sigma \in \Hom(\Gamma, \Sym(V))$, $\zeta \in \Prob(\A^V)$, and $\mu \in \Prob(\A^\Gamma)$. Let $\zeta_t, \mu_t$ denote their evolutions under Glauber dynamics on $\sigma, \Gamma$ respectively. Then for any $t \geq 0$
		\[ \bar{d} \big( P_{\zeta_t}^\sigma, \mu_t \big) \leq \big[ \Delta^\sigma t + \bar{d} \big( P_\zeta^\sigma,\, \mu \big) \big]\exp(Mt) .\]
\end{prop}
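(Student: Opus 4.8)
The plan is to combine the two tools already assembled — the approximate equivariance of Theorem~\ref{thm:Sequivariance} and the Lipschitz bound of Lemma~\ref{lem:misc} — via a single triangle inequality, after first reducing the statement about states $\zeta$ to the statement about microstates $\mb{x}$. The key observation is that $P^\sigma_\bullet \colon \A^V \to \Prob(\A^\Gamma)$ is affine in the obvious sense, so that $P^\sigma_\zeta = \int P^\sigma_{\mb x}\,\zeta(d\mb x)$, and — since Glauber dynamics on the finite system $\sigma$ is linear on measures — evolving $\zeta$ to time $t$ and then taking the empirical distribution is the same as averaging, against $\zeta$, the time-$t$ evolution (as a function of the starting microstate) of $P^\sigma_{\mb x}$. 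In symbols, $P^\sigma_{\zeta_t} = \zeta\big( S^\sigma(t) P^\sigma_\bullet \big)$. Because $\bar d$ is a sup over $1$-Lipschitz (hence convex-combination-respecting) test functions, $\bar d\big(P^\sigma_{\zeta_t}, \nu\big) \le \int \bar d\big( [S^\sigma(t) P^\sigma_\bullet](\mb x),\, \nu\big)\,\zeta(d\mb x)$ for any fixed target $\nu$; so it suffices to bound $\bar d\big( [S^\sigma(t) P^\sigma_\bullet](\mb x),\ \mu_t\big)$ uniformly in $\mb x$, and then integrate.

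For a fixed microstate $\mb x$, I would insert the intermediate point $P^\sigma_{\mb x} S^\Gamma(t)$ and write
\[
\bar d\big( [S^\sigma(t) P^\sigma_\bullet](\mb x),\ \mu_t\big)
\le \bar d\big( [S^\sigma(t) P^\sigma_\bullet](\mb x),\ P^\sigma_{\mb x} S^\Gamma(t)\big)
 + \bar d\big( P^\sigma_{\mb x} S^\Gamma(t),\ \mu S^\Gamma(t)\big).
\]
The first term is exactly what Theorem~\ref{thm:Sequivariance} controls: it is at most $\Delta^\sigma\, t e^{Mt}$. The second term is bounded using Lemma~\ref{lem:misc} (applied with the two measures $P^\sigma_{\mb x}$ and $\mu$) by $e^{Mt}\,\bar d\big(P^\sigma_{\mb x},\, \mu\big)$. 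Hence for every $\mb x$,
\[
\bar d\big( [S^\sigma(t) P^\sigma_\bullet](\mb x),\ \mu_t\big)
\le e^{Mt}\Big( \Delta^\sigma\, t + \bar d\big(P^\sigma_{\mb x},\, \mu\big)\Big).
\]
Integrating against $\zeta$ and using $\int \bar d\big(P^\sigma_{\mb x},\mu\big)\,\zeta(d\mb x) \ge \bar d\big(\int P^\sigma_{\mb x}\,\zeta(d\mb x),\ \mu\big) = \bar d\big(P^\sigma_\zeta,\mu\big)$ — wait, the inequality goes the wrong way for a lower bound; what I actually need is that $\bar d$ of the average is at most the average of $\bar d$, i.e. $\bar d\big(P^\sigma_\zeta, \nu\big) \le \int \bar d\big(P^\sigma_{\mb x}, \nu\big)\zeta(d\mb x)$, which is the convexity already used above with $\nu = \mu_t$; the appearance of $\bar d(P^\sigma_\zeta,\mu)$ on the right-hand side of the proposition comes instead from applying that same convexity to $\nu = \mu$ to get $\int \bar d(P^\sigma_{\mb x},\mu)\,\zeta(d\mb x) \ge \bar d(P^\sigma_\zeta,\mu)$ — no. Let me restate: I will bound $\int \bar d(P^\sigma_{\mb x},\mu)\zeta(d\mb x)$ from above. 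This requires a reverse-convexity step, which fails in general; instead I should carry the average through from the start, so that the quantity appearing is $\int \bar d(P^\sigma_{\mb x},\mu)\,\zeta(d\mb x)$ and then observe this is not $\bar d(P^\sigma_\zeta,\mu)$.

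Here is the correct route. Rather than integrating the per-microstate bound, I would run the triangle-inequality argument directly at the level of states: put the intermediate point $P^\sigma_\zeta S^\Gamma(t)$, giving
\[
\bar d\big(P^\sigma_{\zeta_t},\ \mu_t\big)
\le \bar d\big(P^\sigma_{\zeta_t},\ P^\sigma_\zeta S^\Gamma(t)\big)
 + \bar d\big(P^\sigma_\zeta S^\Gamma(t),\ \mu S^\Gamma(t)\big),
\]
where the second term is at most $e^{Mt}\,\bar d(P^\sigma_\zeta,\mu)$ by Lemma~\ref{lem:misc}. For the first term I use the microstate bound and convexity: $P^\sigma_{\zeta_t} = \zeta\big(S^\sigma(t)P^\sigma_\bullet\big)$ and $P^\sigma_\zeta S^\Gamma(t) = \zeta\big(P^\sigma_\bullet S^\Gamma(t)\big)$, both integrals of $\Prob(\A^\Gamma)$-valued functions against $\zeta$, so
\[
\bar d\big(P^\sigma_{\zeta_t},\ P^\sigma_\zeta S^\Gamma(t)\big)
\le \int \bar d\big( [S^\sigma(t)P^\sigma_\bullet](\mb x),\ P^\sigma_{\mb x} S^\Gamma(t)\big)\,\zeta(d\mb x)
\le \Delta^\sigma\, t e^{Mt},
\]
the last step being Theorem~\ref{thm:Sequivariance} applied pointwise. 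Adding the two terms yields the claimed bound $\big[\Delta^\sigma t + \bar d(P^\sigma_\zeta,\mu)\big]e^{Mt}$. The main thing to get right — and the only real content beyond quoting the two prior results — is the bookkeeping that $P^\sigma_\bullet$ intertwines the finite Glauber semigroup acting on functions $\A^V\to\Prob(\A^\Gamma)$ with the state-level evolution, i.e.\ that taking empirical distributions commutes with the (linear) finite dynamics in the sense $P^\sigma_{\zeta_t}=\zeta(S^\sigma(t)P^\sigma_\bullet)$, together with the fact that $\bar d(\cdot,\nu)$ is convex so that both intermediate distances can be dominated by integrals of the corresponding per-microstate distances against $\zeta$.
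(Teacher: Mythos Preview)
Your final argument is correct and is essentially the paper's proof: both insert the intermediate point $P_\zeta^\sigma S^\Gamma(t)$, bound the first difference by averaging the per-microstate estimate of Theorem~\ref{thm:Sequivariance} against $\zeta$, and bound the second difference by Lemma~\ref{lem:misc}. The paper carries this out at the level of a fixed test function $f \in \Lip_1(\A^\Gamma)$ and takes the supremum at the end, which is exactly what underlies your convexity-of-$\bar d$ step; your initial detour (trying to integrate the full per-microstate bound and then recover $\bar d(P_\zeta^\sigma,\mu)$) was indeed going the wrong way, but you caught it.
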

We give the proof here, since it uses only results already stated:
\begin{proof}
	For any $f \in \Lip_1(\A^\Gamma)$, the triangle inequality gives
	\begin{align*}
		\abs*{ P_{\zeta_t}^\sigma f - \mu_t f}
			&\leq \abs*{P_{\zeta_t}^\sigma f - P_\zeta^\sigma S^\Gamma(t) f} + \abs*{P_\zeta^\sigma S^\Gamma(t) f - \mu_t f} \\
			&= \abs*{\zeta \big[S^\sigma (t) P_{\mb{x}}^\sigma f - P_{\mb{x}}^\sigma S^\Gamma(t) f\big]} + \abs*{P_\zeta^\sigma S^\Gamma(t) f - \mu S^{\Gamma}(t) f}.
	\intertext{Using that $\zeta$ is a probability measure and the definition of $\bar{d}$, this implies the bound}
		\abs*{ P_{\zeta_t}^\sigma f - \mu_t f}
			&\leq \max_{\mb{x} \in \A^V} \bar{d} \big( S^\sigma(t) P_{\mb{x}}^\sigma,\, P_{\mb{x}}^\sigma S^\Gamma(t) \big) + \bar{d} \big( P_\zeta^\sigma S^\Gamma(t),\, \mu S^\Gamma(t) \big).
	\intertext{The first term may be controlled with Theorem~\ref{thm:Sequivariance} and the second with Lemma~\ref{lem:misc} to get}
		\abs*{ P_{\zeta_t}^\sigma f - \mu_t f}
			&\leq \big[ \Delta^\sigma t + \bar{d} \big( P_\zeta^\sigma,\, \mu \big) \big]\exp(Mt) .
	\end{align*}
	The result then follows by taking the supremum over $f \in \Lip_1(\A^\Gamma)$.
\end{proof}

\subsection{Free energy density}
\label{sec:freeenergy}
Given a sequence $\Sigma = (\sigma_n)_{n \in \NN}$ with $\sigma_n \in \Hom(\Gamma, \Sym(V_n))$ such that $\abs{V_n} \to \infty$ and $\Delta^{\sigma_n} \to 0$, we define the \emph{free energy density} of $\mu \in \Prob^\Gamma(\A^\Gamma)$ relative to $\Sigma$ by
	\[ \fed_\Sigma(\mu) = \lim_{\calO \downarrow \mu} \limsup_{n \to \infty} \inf_{\zeta \in \bOmega(\sigma_n, \calO)} \frac{1}{\abs{V_n}} \big[ \zeta (U) - \shent(\zeta) \big] \]
where $\zeta(U)$ is the average energy and $\shent(\zeta)$ is the Shannon entropy, and we follow the convention that the infimum of the empty set is $+\infty$.

The limit is over the net of weak-open neighborhoods of $\mu$, partially ordered by inclusion. Note that for each $n$ the expression $\inf_{\zeta \in \bOmega(\sigma_n, \calO)} \frac{1}{\abs{V_n}} \big[ \zeta (U) - \shent(\zeta) \big]$ is nondecreasing as $\calO \downarrow \mu$, so the limit exists and is equal to the supremum over $\calO \ni \mu$.

Since the map $\mu \mapsto \fed_\Sigma(\mu)$ is defined in terms of a supremum over neighborhoods of $\mu$, it is lower semi-continuous. Consequently, it attains its minimum on $\Prob^\Gamma(\A^\Gamma)$.

Note also that as long as $\bOmega(\sigma_n, \calO)$ is nonempty, for every $\zeta$ in this set we have
	\[ u^{\min} - \log \abs{\A} \leq \frac{1}{\abs{V_n}} \big[ \zeta (U) - \shent(\zeta) \big] \leq u^{\max} . \]
In particular,
	\[ \fed_\Sigma(\mu) \in \big[ u^{\min} - \log \abs{\A}, \ u^{\max} \big] \cup \{+\infty\} . \]

The case $\fed_\Sigma(\mu) = +\infty$ can actually occur, for example if $\mu$ is ergodic and the sofic entropy relative to $\Sigma$ is $-\infty$. This is because, in the ergodic case, $\zeta$ being consistent with $\mu$ is the same as being mostly supported on labelings which are good models for $\mu$, but if the sofic entropy is $-\infty$ then there are no good models.

Note, however, that the function $\mu \mapsto \fed_\Sigma(\mu)$ is not identically $+\infty$ for any choice of $\Sigma$, since the point mass at a constant labeling in $\A^\Gamma$ always has good models.

If $\mu$ were not shift-invariant then the expression defining $\fed_\Sigma(\mu)$ would still make sense, but would take the value $+\infty$ for any $\Sigma$. This is because empirical distributions are always shift-invariant and $\Prob^\Gamma(\A^\Gamma)$ is closed so, no matter what $\Sigma$ we choose, any small enough neighborhood of $\mu$ contains no empirical distributions. In fact, we will see below that for some $\Gamma$ (for example $\Gamma = \FF_2 \times \FF_2$) there exist shift-invariant measures which cannot be approximated by empirical distributions over any $\Sigma$. In these cases the obstruction is that empirical distributions always have finite support.

Following \cite{bowen2003}, a shift-invariant measure in $\Prob^\Gamma(\A^\Gamma)$ is called \emph{periodic} if it has finite support, and a group $\Gamma$ is said to have \emph{property \PA} if the set of periodic measures is dense in $\Prob^\Gamma(\A^\Gamma)$ for every finite alphabet $\A$; in other words, if every shift-invariant probability measure on $\A^\Gamma$ has \textsc{p}eriodic \textsc{a}pproximations.

In Section \ref{sec:PAequiv} we prove the following:
\begin{prop}
\label{prop:PAequiv}
	A group $\Gamma$ has property \PA\ if and only if for every finite alphabet $\A$ and every $\mu \in \Prob^\Gamma(\A^\Gamma)$ there exists a sequence $\Sigma = \big( \sigma_n \in \Hom(\Gamma, \Sym(V_n)) \big)_{n \in \NN}$ such that $\Delta^{\sigma_n} \to 0$ and $\fed_\Sigma(\mu) < +\infty$.
\end{prop}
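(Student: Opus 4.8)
The plan is to prove the two implications separately; the reverse one is routine and the forward one carries the content. I will use repeatedly the elementary fact that for any finite $V$, any $\sigma\in\Hom(\Gamma,\Sym(V))$ and any $\zeta\in\Prob(\A^V)$, the empirical distribution $P_\zeta^\sigma=\tfrac1{\abs V}\sum_{v\in V}(\Pi_v^\sigma)_*\zeta$ has finite support (each $\Pi_v^\sigma$ has finite image) and is shift-invariant, hence is a periodic measure. Now suppose that for every finite $\A$ and every $\mu\in\Prob^\Gamma(\A^\Gamma)$ there is some $\Sigma$ with $\Delta^{\sigma_n}\to0$ and $\fed_\Sigma(\mu)<+\infty$. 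Fix such $\A,\mu$ and a weak-open neighbourhood $\calO\ni\mu$. Since for each $n$ the quantity $\inf_{\zeta\in\bOmega(\sigma_n,\calO)}\tfrac1{\abs{V_n}}[\zeta(U)-\shent(\zeta)]$, and hence its $\limsup$ in $n$, is nondecreasing as the neighbourhood shrinks, if $\bOmega(\sigma_n,\calO)$ were empty for infinitely many $n$ then $\fed_\Sigma(\mu)$ would be $+\infty$; so $\bOmega(\sigma_n,\calO)\neq\emptyset$ for all large $n$. Picking $\zeta_n$ in this set, $P_{\zeta_n}^{\sigma_n}\in\calO$ is periodic. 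As $\mu$, $\calO$, and $\A$ were arbitrary, periodic measures are dense in every $\Prob^\Gamma(\A^\Gamma)$, i.e.\ $\Gamma$ has property \PA.

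\textbf{Reduction of the forward direction.} Now assume $\Gamma$ has property \PA\ and fix $\A$ and $\mu\in\Prob^\Gamma(\A^\Gamma)$. Everything reduces to the following local statement: \emph{for every weak-open $\calO\ni\mu$ and every $\epsilon>0$ there exist a finite $V$, a $\sigma\in\Hom(\Gamma,\Sym(V))$ with $\Delta^\sigma<\epsilon$, and $\mb x\in\A^V$ with $P_{\mb x}^\sigma\in\calO$.} Granting this, I would pick neighbourhoods $\calO_n\downarrow\{\mu\}$, apply the statement with $\epsilon=1/n$ to get $\sigma_n$ and good models $\mb x_n$, and set $\Sigma=(\sigma_n)$; then $\Delta^{\sigma_n}\to0$ and automatically $\abs{V_n}\to\infty$ (as $\Delta^{\sigma_n}\to0$ forces $\sigma_n$ to contain balls isomorphic to arbitrarily large balls of the infinite graph $\Gamma$). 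Using $\zeta=\delta_{\mb x_n}\in\bOmega(\sigma_n,\calO_n)$ and $\tfrac1{\abs{V_n}}[\delta_{\mb x_n}(U)-\shent(\delta_{\mb x_n})]=\tfrac1{\abs{V_n}}U(\mb x_n)\le u^{\max}$, one gets $\fed_\Sigma(\mu)\le u^{\max}<+\infty$, as required.

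\textbf{Proof of the local statement via alphabet enrichment.} To force $\Delta^\sigma$ small I would enrich the alphabet. Set $\A'=\A\times\{0,1\}$, identify $(\A')^\Gamma$ with $\A^\Gamma\times\{0,1\}^\Gamma$ as $\Gamma$-spaces, and let $\mu'=\mu\otimes\Unif(\{0,1\})^{\otimes\Gamma}$. The point of the extra coordinate is this: if $\sigma_0\in\Hom(\Gamma,\Sym(V_0))$, $\mb x'\in(\A')^{V_0}$, and $\sigma_0^\gamma v=v$ for some $v\in V_0$ and $\gamma\neq e$, then $(\Pi_v^{\sigma_0}\mb x')(\alpha\gamma)=\mb x'(\sigma_0^\alpha\sigma_0^\gamma v)=\mb x'(\sigma_0^\alpha v)=(\Pi_v^{\sigma_0}\mb x')(\alpha)$ for all $\alpha$, so $\Pi_v^{\sigma_0}\mb x'$ lies in the closed set $\mathrm{Fix}(\gamma)=\{\mb y\in(\A')^\Gamma:\gamma\mb y=\mb y\}$. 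Since the $\{0,1\}$-marginal of $\mu'$ is the product measure $\Unif(\{0,1\})^{\otimes\Gamma}$ and $\Gamma$ is infinite, $\mu'(\mathrm{Fix}(\gamma))=0$ for every $\gamma\neq e$. Recalling that $\ball[\sigma_0]{v}{R}\cong\ball[\Gamma]{e}{R}$ whenever $\gamma\mapsto\sigma_0^\gamma v$ is injective on $\ball[\Gamma]{e}{R+1}$ --- equivalently whenever no nonidentity element of length $\le 2R+2$ fixes $v$ under $\sigma_0$ --- we obtain, writing $\nu_0=P_{\mb x'}^{\sigma_0}$,
\[ \delta_R^{\sigma_0} \ \le\ \sum_{\gamma\in\ball[\Gamma]{e}{2R+2}\setminus\{e\}} \nu_0\big(\mathrm{Fix}(\gamma)\big) . \]
Given $\calO$ and $\epsilon$, fix $R$ with $9\cdot(2/3)^R<\epsilon/2$, put $N=\abs{\ball[\Gamma]{e}{2R+2}}$, and let $\calO'$ be the set of $\nu'\in\Prob^\Gamma((\A')^\Gamma)$ whose $\A^\Gamma$-marginal lies in $\calO$ and with $\nu'(\mathrm{Fix}(\gamma))<\epsilon/(12N)$ for each $\gamma\in\ball[\Gamma]{e}{2R+2}\setminus\{e\}$; this is a neighbourhood of $\mu'$ because the $\A^\Gamma$-marginal map is continuous and each $\nu'\mapsto\nu'(\mathrm{Fix}(\gamma))$ is upper semicontinuous (the $\mathrm{Fix}(\gamma)$ being closed) and vanishes at $\mu'$. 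By property \PA\ there is a periodic measure in $\calO'$, and every periodic measure is, after an arbitrarily small weak perturbation, of the form $P_{\mb x'}^{\sigma_0}$ for some finite $\sigma_0$ and $\mb x'\in(\A')^{V_0}$: an orbit measure of a periodic point with stabiliser $\Lambda$ is exactly the empirical distribution of the action of $\Gamma$ on $\Gamma/\Lambda$ (with $\mb x'$ read off the periodic point), and a general finite-support invariant measure comes from a disjoint union of such Schreier graphs with multiplicities matching rational approximations of the orbit weights. Taking $\sigma:=\sigma_0$ and letting $\mb x\in\A^{V_0}$ be the $\A$-component of $\mb x'$: $P_{\mb x}^{\sigma}$ is the $\A^\Gamma$-marginal of $\nu_0\in\calO'$, hence in $\calO$; and the display gives $\delta_R^{\sigma_0}<\epsilon/12$, so $\Delta^{\sigma_0}\le 9\cdot(2/3)^R+6\delta_R^{\sigma_0}<\epsilon$. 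This proves the local statement, hence the proposition.

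\textbf{Main obstacle.} The essential difficulty is the forward-direction step of making $\Delta^\sigma$ small: the homomorphism naturally attached to a periodic approximation of $\mu$ can be far from free --- already the point mass at a constant labelling is its own periodic approximation --- so one must actively build approximations whose Schreier graphs carry no short cycles. The device above (enriching by an i.i.d.\ Bernoulli coordinate, whose sets $\mathrm{Fix}(\gamma)$ are closed and $\mu'$-null, so that weak closeness to $\mu'$ forces $\nu_0(\mathrm{Fix}(\gamma))$ and hence $\delta_R^{\sigma_0}$ small) is the crux. The remaining points --- the exact radius threshold in the ball-isomorphism criterion, and expressing an arbitrary finite-support invariant measure as an empirical distribution over a disjoint union of Schreier graphs --- are routine but should be spelled out carefully.
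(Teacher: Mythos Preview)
Your proposal is correct and follows essentially the same route as the paper: the reverse direction is identical, and for the forward direction both you and the paper represent periodic measures as empirical distributions over disjoint unions of orbit/Schreier graphs and force approximate freeness by enriching the alphabet with an i.i.d.\ Bernoulli coordinate. The only cosmetic differences are that the paper packages the forward step as a separate intermediate proposition and controls $\delta_R^{\sigma_n}$ via clopen cylinder events $\{\mb w:\mb w(\beta\gamma)=\mb w(\beta)\ \forall\beta\in D\}$ (with $D$ taken large) rather than via upper semicontinuity of $\nu'\mapsto\nu'(\mathrm{Fix}(\gamma))$ on the closed set $\mathrm{Fix}(\gamma)$; and the paper uses the slightly sharper radius threshold $2R+1$ where you use $2R+2$.
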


Property \PA\ was proved to hold for free groups by Bowen in \cite[Theorem 3.4]{bowen2003}. Kechris later studied another equivalent property in \cite{kechris2012} which he called ``\textsc{md}''; see also the survey \cite{burton2020} for more recent information on which groups are known to have this property. In particular:
\begin{itemize}
	\item Amenable groups have property \PA.
	\item If $\Gamma_1, \Gamma_2$ are both nontrivial groups which are either finite or property-\PA, then the free product $\Gamma_1 \ast \Gamma_2$ has property \PA. 
	\item The recent negative solution to Connes' embedding conjecture \cite{ji2020} implies that the direct product $\FF_2 \times \FF_2$ does \emph{not} have property \PA.
\end{itemize}

\subsubsection{Calculation}
In some cases it can be possible to calculate the free energy density. Fix a sequence $\Sigma$ with $\Delta^{\sigma_n} \to 0$, and for each $n$ let $\xi_n \in \Prob(\A^{V_n})$ denote the unique finitary Gibbs measure. Suppose that $P_{\xi_n}^{\sigma_n} \xrightarrow{wk} \mu$. Then for any $\calO \ni \mu$ we have $\xi_n \in \mb\Omega(\sigma_n, \calO)$ for all large enough $n$. But $\xi_n$, by virtue of being the Gibbs measure, has minimal free energy among all probability measures on $\A^{V_n}$, so 
	\[ \inf_{\zeta \in \mb\Omega(\sigma_n, \calO)} \big[ \zeta(U) - \shent(\zeta) \big] = \xi_n(U) - \shent(\xi_n) = -\log Z_n \]
where $Z_n$ is the normalizing constant appearing in the definition of $\xi_n$. It follows that
	\[ \fed_\Sigma(\mu) = - \liminf_{n \to \infty} \frac{1}{\abs{V_n}} \log Z_n . \]
	
The main theorem of \cite{montanari2012} implies that, for the Ising model with no external field on a regular tree, the weak limit of $P_{\xi_n}^{\sigma_n}$ is $\frac{1}{2}(\mu_+ + \mu_-)$ for any $\Sigma$, where $\mu_+,\mu_-$ are the Gibbs measures with $+,-$ boundary conditions respectively. In this case $\frac{1}{\abs{V_n}} \log Z_n$ actually converges, and the limit can be written down explicitly; see \cite{dembo2010}.

%

\subsection{Measuring non-Gibbs-ness}
As in \cite{holley1971}, we make use of the function 
	\[ F_0(s) = \left\{ \begin{array}{ll}
					s - s\log s - 1, & s > 0 \\
					-1, & s = 0
					\end{array} \right. \]
which appears in an expression for the time derivative of free energy [Proposition~\ref{prop:derivative}].
This function is concave, nonpositive, and equal to 0 if and only if $s=1$. A graph is included in Figure~\ref{fig:F0}.
\begin{figure}
\begin{center}
	\includegraphics[width=0.75\textwidth]{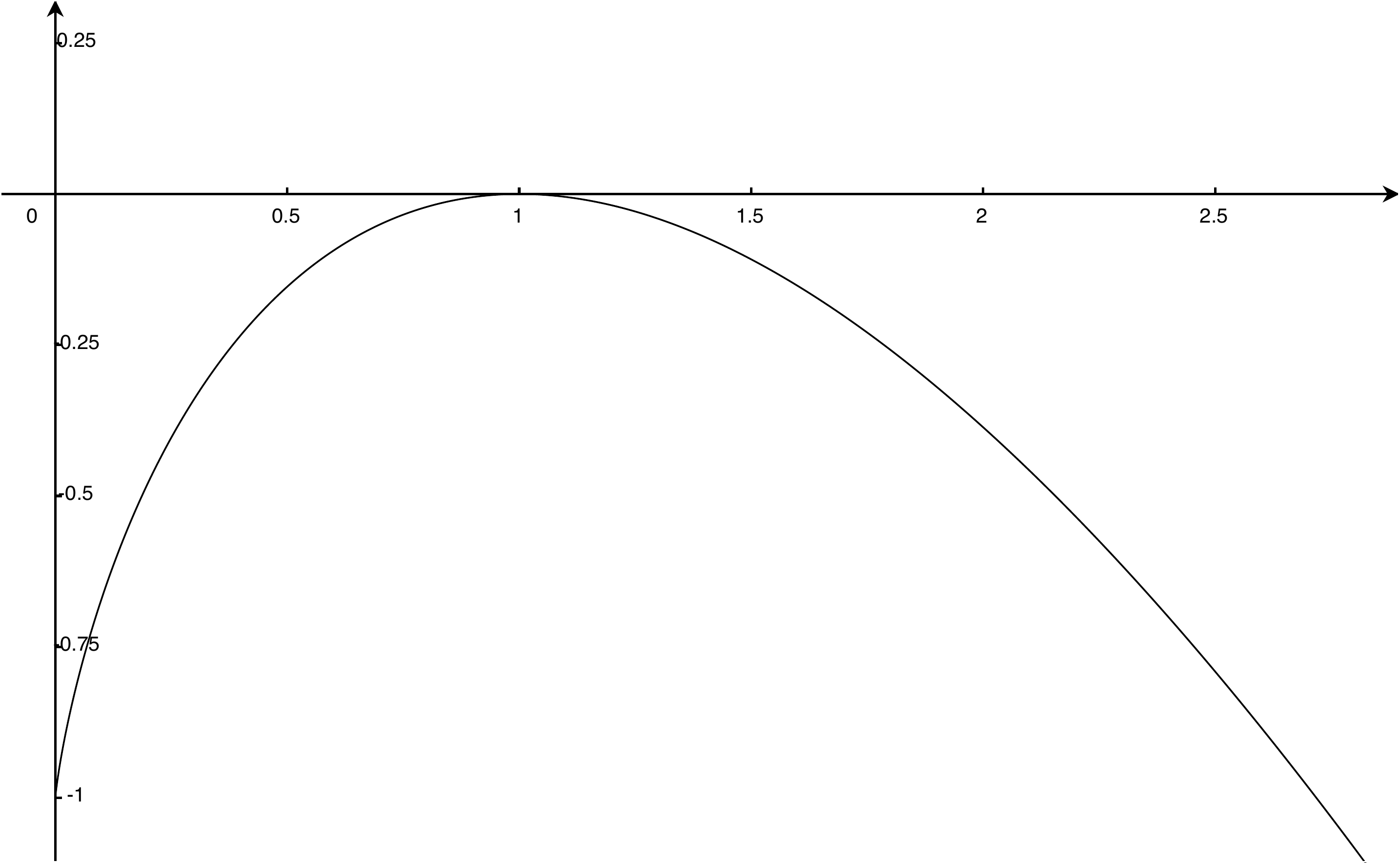}
	\caption{Graph of $F_0$}
	\label{fig:F0}
\end{center}
\end{figure}
If $\mu_R \in \Prob(\A^{\ball{e}{R}})$ has full support, we define
	\[ \Delta_\ta^R(\mu_R) = \sum_{\mb{y} \in \A^{\ball{e}{R}}} \mu_{R}\{\mb{y}\} \cdot F_0 \left( \frac{\exp\{ - \Phi_e(\mb{y})\}}{\exp\{ - \Phi_e(\mb{y}^{e \to \ta})\}} \frac{\mu_R \{\mb{y}^{e \to \ta}\}}{\mu_R \{\mb{y}\}}\right) . \]
This measures the average failure of $\mu_R$ to be consistent with the Gibbs specification.

\begin{lemma}
	Suppose $\mu \in \Prob(\A^\Gamma)$ is a translation-invariant measure such that for every $R \geq 1$ the marginal $\mu_R \in \Prob(\A^{\ball{e}{R}})$ has full support and $\Delta_\ta^R(\mu_R) = 0$ for every $\ta \in \A$. Then $\mu$ is Gibbs.
\end{lemma}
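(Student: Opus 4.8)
The plan is to extract from the hypothesis, at each finite radius $R$, the statement that $c_e(\mb{y},\cdot)$ is the $\mu_R$-conditional law of the spin at $e$ given the configuration on the rest of the ball, and then to let $R \to \infty$ by a martingale-convergence argument. For the first step, recall that $F_0 \le 0$ with $F_0(s) = 0$ only for $s = 1$, so since $\mu_R$ has full support the vanishing of $\Delta_\ta^R(\mu_R) = \sum_{\mb{y}} \mu_R\{\mb{y}\}\, F_0(\cdots)$ forces every summand to be zero; that is, for all $\mb{y} \in \A^{\ball{e}{R}}$ and $\ta \in \A$,
\[ \frac{\exp\{-\Phi_e(\mb{y})\}}{\exp\{-\Phi_e(\mb{y}^{e\to\ta})\}} \cdot \frac{\mu_R\{\mb{y}^{e\to\ta}\}}{\mu_R\{\mb{y}\}} = 1 . \]
Rearranging, $\mu_R\{\mb{y}^{e\to\ta}\}$ is proportional in $\ta$ (with a constant depending only on the values of $\mb{y}$ off $e$) to $\exp\{-\Phi_e(\mb{y}^{e\to\ta})\}$; summing over $\ta$ to identify the normalization and comparing with the definition of $c_e$, this gives $\mu_R\{\mb{y}^{e\to\ta}\} = c_e(\mb{y},\ta)\,\mu_R\big(\{\mb{z} : \mb{z} = \mb{y} \text{ off } e\}\big)$.

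Next I would use that $c_e(\mb{y},\ta)$ depends on $\mb{y}$ only through the spins at the neighbors of $e$, so the displayed identity in fact holds for every $R \ge 1$ simultaneously: writing $\calF_R$ for the $\sigma$-algebra on $\A^\Gamma$ generated by the coordinates in $\ball{e}{R}\setminus\{e\}$, it says precisely that $\mb{y}\mapsto c_e(\mb{y},\ta)$ is a version of $\mu(\{\mb{x} : \mb{x}(e)=\ta\} \mid \calF_R)$ for each $R$. The $\calF_R$ increase with $R$, and since $\bigcup_R (\ball{e}{R}\setminus\{e\}) = \Gamma\setminus\{e\}$ and every cylinder event depends on finitely many coordinates, $\bigvee_R \calF_R = \tail_e$. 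By the martingale convergence theorem for increasing families of $\sigma$-algebras (L\'evy's upward theorem), $\mu(\{\mb{x} : \mb{x}(e)=\ta\}\mid\calF_R) \to \mu(\{\mb{x} : \mb{x}(e)=\ta\}\mid\tail_e)$ $\mu$-a.s.; but the left side is the constant-in-$R$ function $c_e(\cdot,\ta)$, so the limit equals $c_e(\cdot,\ta)$ as well. This is exactly the defining Gibbs relation at the vertex $e$.

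Finally, to obtain the relation at an arbitrary $\gamma \in \Gamma$ I would invoke shift-invariance of $\mu$: the shift by $\gamma$ carries $\tail_e$ to $\tail_\gamma$ and intertwines $c_e$ with $c_\gamma$ (equivalently, the marginal of $\mu$ on $\ball{\gamma}{R}$ inherits both hypotheses via the canonical label- and direction-preserving isomorphism $\ball{\gamma}{R}\cong\ball{e}{R}$, so the two steps above apply verbatim), whence $c_\gamma(\cdot,\ta)$ is a version of $\mu(\{\mb{x} : \mb{x}(\gamma)=\ta\}\mid\tail_\gamma)$ for every $\gamma$ and $\ta$, so $\mu$ is Gibbs. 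I expect the only genuinely delicate point to be the limit step --- verifying that the increasing union of the ball $\sigma$-algebras is exactly $\tail_e$ and that the martingale convergence theorem applies --- while Step 1 and the shift-invariance argument are routine manipulations of the definitions.
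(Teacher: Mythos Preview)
Your proof is correct and follows essentially the same approach as the paper: deduce from $\Delta_\ta^R(\mu_R)=0$ and full support that each $F_0$-term vanishes, rewrite this as $\mu\big(\{\mb{x}(e)=\ta\}\mid\calF_R\big)=c_e(\cdot,\ta)$, pass to the limit $R\to\infty$ by martingale convergence, and conclude via translation-invariance. Your write-up is in fact slightly more detailed than the paper's in justifying why each summand must vanish and in naming L\'evy's upward theorem explicitly.
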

\begin{proof}
	Fix $R \geq 1$, and let $\scrS_{\ball{e}{R} \setminus \{e\}}$ denote the $\sigma$-algebra generated by sites in $\ball{e}{R} \setminus \{e\}$. Then by definition of conditional expectation, for any $\ta \in \A$
		\[ \mu\big(\{ \mb{x} \in \A^\Gamma \st \mb{x}(e) = \ta\} \mid \scrS_{\ball{e}{R} \setminus \{e\}} \big)(\mb{y}) = \frac{\mu_R\{\mb{y}^{e \to \ta}\}}{\sum_{\tb \in \A} \mu_R\{\mb{y}^{e \to \tb}\} } , \]
	where on the right-hand side we use the shorthand $\mu_R\{\mb{y}\} = \mu_R\{\mb{y} \!\restriction_{\ball{e}{R}}\}$ for $\mb{y} \in \A^\Gamma$.
	Our assumption that $\mu_R$ has full support and $\Delta_\ta^R(\mu_R) = 0$ implies that 
		\[ \frac{\mu_R\{\mb{y}^{e \to \ta}\}}{\sum_{\tb \in \A} \mu_R\{\mb{y}^{e \to \tb}\} }
			= \left( \sum_{\tb \in \A} \frac{\mu_R\{\mb{y}^{e \to \tb}\}}{\mu_R\{\mb{y}^{e \to \ta}\}} \right)^{-1}
			= \left( \sum_{\tb \in \A} \frac{\exp\{-\Phi_e(\mb{y}^{e \to \tb})\}}{\exp\{-\Phi_e(\mb{y}^{e \to \ta})\}} \right)^{-1}
			= c_e (\mb{y}, \ta) \]
	Taking $R$ to infinity, by martingale convergence we get
		\[ \mu(\cdot \mid \tail_e)(\mb{y}) = c_e(\mb{y}, \cdot ). \]
	By translation invariance, it follows that $\mu$ is Gibbs.
\end{proof}


\section{Proof of Theorem \ref{thm:main}}

\begin{prop}
\label{prop:derivative}
	Let $\zeta_0 \in \Prob(\A^V)$, and let $\zeta_t$ denote its evolution under Glauber dynamics. Then for all $t > 0$, $\zeta_t$ has full support and 
		\[ \frac{d}{dt} \big[ \zeta_t(U) - \shent(\zeta_t) \big]
			= \sum_{\mb{x}, v, \ta} F_0 \left( \frac{\exp\{ - \Phi_v(\mb{x})\}}{\exp\{ - \Phi_v(\mb{x}^{v \to \ta})\}} \frac{\zeta_t \{\mb{x}^{v \to \ta}\}}{\zeta_t \{\mb{x}\}}\right) \zeta_t \{\mb{x}\} c_v(\mb{x}, \ta) . \]
\end{prop}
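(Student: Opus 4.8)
The plan is to treat $\zeta_t$ as the law of an honest finite-state Markov chain on $\A^V$ (the only setting in which $U$ and $\shent$, hence the statement, are defined), to differentiate the free energy by hand using the forward Kolmogorov equation and a change of variables, and then to perform a symmetrization that converts a logarithm into $F_0$.

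First I would dispose of the full-support claim and the attendant smoothness. Since each rate $c_v(\mb{x},\ta)$ is a normalized exponential it is strictly positive, so the chain can pass between any two configurations by successive single-site flips; it is therefore irreducible, and a finite irreducible continuous-time chain has a strictly positive transition kernel $P_t$ for every $t>0$ (e.g.\ write $\Omega+\lambda I$ as an entrywise-nonnegative irreducible matrix and expand its exponential). Hence $\zeta_t\{\mb{y}\}=\sum_{\mb{x}}\zeta_0\{\mb{x}\}P_t(\mb{x},\mb{y})>0$. Moreover $t\mapsto\zeta_t\{\mb{x}\}$ solves a linear ODE, so it is smooth, and being strictly positive on $(0,\infty)$ the function $t\mapsto\log\zeta_t\{\mb{x}\}$ is smooth there; thus $G(t):=\zeta_t(U)-\shent(\zeta_t)$ is differentiable on $(0,\infty)$ and may be differentiated term by term.

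Next I would compute the derivative. Pairing the generator with $\zeta_t$, using that $c_v(\mb{x},\cdot)$ and its normalizer do not depend on $\mb{x}(v)$ and that $\sum_{\ta}c_v(\mb{x},\ta)=1$, the forward equation takes the form
\[ \frac{d}{dt}\zeta_t\{\mb{x}\}=\sum_{v}\Big(c_v(\mb{x},\mb{x}(v))\sum_{\tb\in\A}\zeta_t\{\mb{x}^{v\to\tb}\}-\zeta_t\{\mb{x}\}\Big). \]
Setting $g_t(\mb{x}):=U(\mb{x})+\log\zeta_t\{\mb{x}\}$, the product rule gives $\frac{d}{dt}G=\sum_{\mb{x}}\big(\frac{d}{dt}\zeta_t\{\mb{x}\}\big)(g_t(\mb{x})+1)$, and the $+1$ term drops out because $\sum_{\mb{x}}\frac{d}{dt}\zeta_t\{\mb{x}\}=0$. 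I would then substitute the forward equation and re-index the gain term by the map $\iota_v\colon(\mb{x},\ta)\mapsto(\mb{x}^{v\to\ta},\mb{x}(v))$, which for each fixed $v$ is an involution of $\A^V\times\A$; exploiting once more that $c_v$ ignores the spin at $v$, the whole expression collapses to
\[ \frac{d}{dt}G=\sum_{\mb{x},v,\ta}c_v(\mb{x},\ta)\,\zeta_t\{\mb{x}\}\,\big(g_t(\mb{x}^{v\to\ta})-g_t(\mb{x})\big). \]
Using the identity $U(\mb{x}^{v\to\ta})-U(\mb{x})=\Phi_v(\mb{x}^{v\to\ta})-\Phi_v(\mb{x})$ already recorded for the finitary Gibbs conditionals (take $\tb=\mb{x}(v)$ there), the bracket equals $\log\rho_v(\mb{x},\ta)$, where $\rho_v(\mb{x},\ta):=\frac{\exp\{-\Phi_v(\mb{x})\}}{\exp\{-\Phi_v(\mb{x}^{v\to\ta})\}}\cdot\frac{\zeta_t\{\mb{x}^{v\to\ta}\}}{\zeta_t\{\mb{x}\}}$ is precisely the argument of $F_0$ in the statement.

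The last and least mechanical step is to convert $\log\rho_v$ into $F_0(\rho_v)$. From $(\mb{x}^{v\to\ta})^{v\to\mb{x}(v)}=\mb{x}$ and the $\mb{x}(v)$-independence of $Z_v$ and $c_v$, I would check the two identities $\rho_v\circ\iota_v=1/\rho_v$ and $W_v\circ\iota_v=\rho_v\cdot W_v$, where $W_v(\mb{x},\ta):=c_v(\mb{x},\ta)\zeta_t\{\mb{x}\}>0$. Because $\iota_v$ is an involution of the index set, averaging any sum with its $\iota_v$-image gives, for an arbitrary $\phi$, the identity $\sum_{\mb{x},v,\ta}W_v\,\phi(\rho_v)=\tfrac12\sum_{\mb{x},v,\ta}W_v\big(\phi(\rho_v)+\rho_v\,\phi(1/\rho_v)\big)$. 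Taking $\phi=\log$ shows $\frac{d}{dt}G=\tfrac12\sum_{\mb{x},v,\ta}W_v\,(1-\rho_v)\log\rho_v$; taking $\phi=F_0$ and using the one-line elementary identity $F_0(s)+sF_0(1/s)=(1-s)\log s$ (valid for $s>0$) shows $\sum_{\mb{x},v,\ta}W_v\,F_0(\rho_v)$ equals the same thing. Hence $\frac{d}{dt}G=\sum_{\mb{x},v,\ta}F_0(\rho_v(\mb{x},\ta))\,\zeta_t\{\mb{x}\}\,c_v(\mb{x},\ta)$, which is the assertion. I expect the main obstacle to be purely organizational: correctly tracking the change of variables in the forward equation and staying careful about which quantities do and do not depend on the spin at the updated site; the $F_0$ identity and the symmetrization themselves are short.
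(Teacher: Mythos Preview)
Your proof is correct and follows the same arc as the paper's: use the forward equation to differentiate the free energy, arrive at $\sum_{\mb{x},v,\ta} W_v\log\rho_v$, and then exploit reversibility to convert this into the $F_0$ form. The only real difference is how the final conversion is packaged. The paper, following Holley, introduces the matrix $\mathfrak{A}(\mb{x},\mb{y})$ (essentially the transpose of the rate matrix), records that its column sums vanish and that its rows annihilate $P(\mb{y})=e^{-U(\mb{y})}$, and then passes from $\sum_{\mb{x},\mb{y}}\mathfrak{A}(\mb{x},\mb{y})\zeta_t\{\mb{y}\}\log\frac{\zeta_t\{\mb{x}\}}{P(\mb{x})}$ to the $F_0$ expression in one line. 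You instead work directly with the single-site involution $\iota_v$, the detailed-balance identities $\rho_v\circ\iota_v=1/\rho_v$ and $W_v\circ\iota_v=\rho_v W_v$, and the elementary identity $F_0(s)+sF_0(1/s)=(1-s)\log s$. Your route is more explicit at precisely the spot where the paper says ``using these two properties \ldots\ we see that'' and leaves the algebra to the reader; conversely, the $\mathfrak{A}$ formulation makes the dependence on reversibility of the dynamics with respect to $P$ slightly more transparent. In substance the two arguments are the same.
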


Our proof of this proposition is based on the proof of the analogous result in Holley's paper \cite{holley1971}, with some minor changes.

For $\mb{x} \in \A^V$, write
	\[ P(\mb{x}) = \exp\{ -U(\mb{x})\} . \]
This is just the Gibbs measure on $V$, except without the normalizing factor. It is easy to see that
	\[ \zeta(U) - \shent(\zeta) = \sum_{\mb{x}} \zeta\{\mb{x}\} \log \frac{\zeta\{\mb{x}\}}{P(\mb{x})} . \]

\begin{proof}[Proof of proposition]
	A calculation using the Markov generator shows that
		\[ \frac{d}{dt} \zeta_t \{\mb{x}\} = \sum_{v,\ta} \big[ \zeta_t\{ \mb{x}^{v \to \ta}\} c_v(\mb{x}^{v \to \ta}, \mb{x}(v)) - \zeta_t \{\mb{x}\} c_v (\mb{x}, \ta) \big] . \]
	In particular, if $\mb{x}$ is such that $\zeta_t\{\mb{x}\} = 0$ but there exist $v,\ta$ such that $\zeta_t\{ \mb{x}^{v \to \ta}\} > 0$, then $\frac{d}{dt} \zeta_t \{\mb{x}\} > 0$. Unless $t=0$ this would imply the existence of times where $\zeta_s$ gives negative mass to $\{x\}$. Therefore $\zeta_t$ has full support for all $t > 0$. 
	
	Therefore
	\begin{align*}
		\frac{d}{dt} \big[ \zeta_t(U) - \shent(\zeta_t) \big]
			&= \sum_{\mb{x} \in \A^V} \frac{d}{dt} \left[ \zeta_t \{\mb{x}\} \log \frac{\zeta_t \{\mb{x}\}}{P(\mb{x})} \right] \\
			&= \sum_{\mb{x} \in \A^V} \frac{d}{dt} \left[ \zeta_t \{\mb{x}\} \right]  \log \frac{\zeta_t \{\mb{x}\}}{P(\mb{x})} \\
			&= \sum_{\mb{x} \in \A^V} \sum_{v,\ta} \big[ \zeta_t\{ \mb{x}^{v \to \ta}\} c_v(\mb{x}^{v \to \ta}, \mb{x}(v)) - \zeta_t \{\mb{x}\} c_v (\mb{x}, \ta) \big]  \log \frac{\zeta_t \{\mb{x}\}}{P(\mb{x})}.
	\end{align*}
	
	For $\mb{x}, \mb{y} \in \A^V$, define
		\[ \mathfrak{A}(\mb{x}, \mb{y})
			= \left\{ \begin{array}{cl}
				-\displaystyle\sum_{\substack{\ta, v \\ \ta \ne \mb{x}(v)}} c_v(\mb{y}, \ta) , & \mb{x} = \mb{y} \\
				c_v(\mb{y}, \ta) , & \mb{x} \ne \mb{y},\ \mb{x} = \mb{y}^{v \to \ta} \text{ for some } v \in V, \ta \in \A \\
				0 , & \text{else}.
			\end{array} \right. \]
	This has the following useful properties:
	\begin{equation}
		\sum_{\mb{x}} \mathfrak{A}(\mb{x}, \mb{y}) = 0 \quad \forall \mb{y}.
	\end{equation}
	\begin{proof}
	\renewcommand{\qedsymbol}{$\triangleleft$}
		For any $\mb{y}$,
			\[ \sum_{\mb{x}} \mathfrak{A}(\mb{x}, \mb{y})  = \mathfrak{A}(\mb{y}, \mb{y}) + \sum_{v \in V} \sum_{\ta \ne \mb{y}(v)} \mathfrak{A}(\mb{y}^{v \to \ta}, \mb{y}) + 0 = 0 . \qedhere \]
	\end{proof}
	\begin{equation}
		\sum_{\mb{y}} \mathfrak{A}(\mb{x}, \mb{y}) P(\mb{y}) = 0 \quad \forall \mb{x}.
	\end{equation}
	\begin{proof}
	\renewcommand{\qedsymbol}{$\triangleleft$}
		For any $\mb{x}$,
		\begin{align*}
			\sum_{\mb{y}} \mathfrak{A}(\mb{x}, \mb{y}) P(\mb{y})
				&= \mathfrak{A}(\mb{x}, \mb{x}) P(\mb{x}) + \sum_{v \in V} \sum_{\ta \ne \mb{x}(v)} \mathfrak{A}(\mb{x}, \mb{x}^{v \to \ta}) P(\mb{x}^{v \to \ta}) + 0 = 0 \\
				&= \sum_{\substack{\ta, v \\ \ta \ne \mb{x}(v)}}  \big[ -c_v(\mb{x}, \ta) \exp\{ - U(\mb{x})\} + c_v(\mb{x}^{v \to \ta}, \mb{x}(v)) \exp\{-U(\mb{x}^{v \to \ta})\} \big] \\
				&= 0 .
		\end{align*}
		In fact every term of this last sum is zero because
			\[ \frac{c_v(\mb{x}, \ta) \exp\{ - U(\mb{x})\}}{c_v(\mb{x}^{v \to \ta}, \mb{x}(v)) \exp\{-U(\mb{x}^{v \to \ta})\}} = \frac{ \exp\{-\Phi_v(\mb{x}^{v \to \ta})\} \exp \{ - U(\mb{x})\}}{\exp\{-\Phi_v(\mb{x})\} \exp \{ - U(\mb{x}^{v \to \ta})\}} = 1 . \qedhere \]
	\end{proof}
	
	Using these two properties of $\mathfrak{A}$, and the fact that $\zeta_t$ has full support, we see that
	\begin{align*}
		\frac{d}{dt} \big[ \zeta_t(U) - \shent(\zeta_t) \big]
			&= \sum_{\mb{x}, \mb{y}} \mathfrak{A}(\mb{x}, \mb{y}) \zeta_t \{\mb{y}\} \log \frac{\zeta_t\{\mb{x}\}}{P(\mb{x})} \\
			&= \sum_{\mb{x}, v, \ta} F_0 \left( \frac{P(\mb{x})}{P(\mb{x}^{v \to \ta})} \frac{\zeta_t \{\mb{x}^{v \to \ta}\}}{\zeta_t \{\mb{x}\}}\right) \zeta_t \{\mb{x}\} c_v(\mb{x}, \ta) .
	\end{align*}
	The desired formula follows from the fact that $\frac{P(\mb{x})}{P(\mb{x}^{v \to \ta})} = \frac{ \exp\{-\Phi_v(\mb{x})\}}{\exp\{-\Phi_v(\mb{x}^{v \to \ta})\}}$, also used above.
\end{proof}

We first use the previous result to show that free energy density is nonincreasing.

\begin{prop}
\label{prop:nonincreasing}
	Suppose $\mu \in \Prob(\A^\Gamma)$, and let $\Sigma = \big(\sigma_n \in \Hom(\Gamma, \Sym(V_n)) \big)_{n \in \NN}$ such that $\Delta^{\sigma_n} \to 0$.
	
	Then $\fed_\Sigma(\mu_0) \geq \fed_\Sigma(\mu_t)$ for all $t \geq 0$.
\end{prop}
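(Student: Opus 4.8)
The plan is to transport near-optimal consistent states forward under the finite Glauber dynamics. The two ingredients are already available: Proposition~\ref{prop:stability}, which says that Glauber evolution on $\sigma_n$ carries a state consistent with $\mu_0$ to a state consistent (in a slightly larger neighborhood) with $\mu_t$ once $n$ is large; and Proposition~\ref{prop:derivative}, whose formula shows that the finite-volume free energy $\zeta(U)-\shent(\zeta)$ is nonincreasing along the dynamics, since $F_0\le 0$. First I would dispose of trivial cases: if $\fed_\Sigma(\mu_0)=+\infty$ there is nothing to prove, and if $\mu_0$ is not shift-invariant then $\fed_\Sigma(\mu_0)=+\infty$ anyway; when $\mu_0$ is shift-invariant so is $\mu_t$, because the generator $\Omega^\Gamma$ commutes with the $\Gamma$-action. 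So assume $\fed_\Sigma(\mu_0)<+\infty$. Since $\bar d$ generates the weak topology, open $\bar d$-balls form a neighborhood base at $\mu_t$ and are cofinal in the defining net, so it suffices to prove that for every $\varepsilon_t>0$, with $\calO_t=\{\nu:\bar d(\nu,\mu_t)<\varepsilon_t\}$, one has $\limsup_{n\to\infty}\inf_{\zeta\in\bOmega(\sigma_n,\calO_t)}\frac{1}{\abs{V_n}}\big[\zeta(U)-\shent(\zeta)\big]\le\fed_\Sigma(\mu_0)$.

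Next, fix $\varepsilon_t>0$, pick $\varepsilon_0\in\big(0,\tfrac12\varepsilon_t e^{-Mt}\big)$, set $\calO_0=\{\nu:\bar d(\nu,\mu_0)<\varepsilon_0\}$, and choose $N$ with $\Delta^{\sigma_n}t<\tfrac12\varepsilon_t e^{-Mt}$ for all $n\ge N$ (possible since $\Delta^{\sigma_n}\to 0$). For $n\ge N$ and any $\zeta\in\bOmega(\sigma_n,\calO_0)$, let $\zeta^{(t)}$ denote the evolution of $\zeta$ under Glauber dynamics on $\sigma_n$ to time $t$. Proposition~\ref{prop:stability} gives $\bar d\big(P_{\zeta^{(t)}}^{\sigma_n},\mu_t\big)\le\big[\Delta^{\sigma_n}t+\bar d(P_\zeta^{\sigma_n},\mu_0)\big]e^{Mt}<\varepsilon_t$, so $\zeta^{(t)}\in\bOmega(\sigma_n,\calO_t)$. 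On the other hand, by Proposition~\ref{prop:derivative} the map $s\mapsto\zeta^{(s)}(U)-\shent(\zeta^{(s)})$ has derivative $\sum_{\mb x,v,\ta}F_0(\cdots)\,\zeta^{(s)}\{\mb x\}\,c_v(\mb x,\ta)\le 0$ for $s>0$, and it is continuous at $s=0$ since $\A^{V_n}$ is finite; hence $\zeta^{(t)}(U)-\shent(\zeta^{(t)})\le\zeta(U)-\shent(\zeta)$.

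Combining these, for every $n\ge N$ — using the convention $\inf\emptyset=+\infty$, which makes the inequality automatic when $\bOmega(\sigma_n,\calO_0)=\emptyset$ — we get $\inf_{\zeta'\in\bOmega(\sigma_n,\calO_t)}\frac{1}{\abs{V_n}}\big[\zeta'(U)-\shent(\zeta')\big]\le\inf_{\zeta\in\bOmega(\sigma_n,\calO_0)}\frac{1}{\abs{V_n}}\big[\zeta(U)-\shent(\zeta)\big]$. Taking $\limsup_{n\to\infty}$, the right side is $\le\fed_\Sigma(\mu_0)$, because $\fed_\Sigma(\mu_0)$ is by definition the supremum over weak-open neighborhoods of $\mu_0$ of exactly such $\limsup$'s. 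This holds for every $\varepsilon_t>0$; taking the supremum over $\varepsilon_t>0$ (equivalently, the limit $\calO_t\downarrow\mu_t$) yields $\fed_\Sigma(\mu_t)\le\fed_\Sigma(\mu_0)$. The only real subtlety is the order of quantifiers: one must fix the time-$t$ neighborhood first, choose $\calO_0$ and $N$ accordingly, push \emph{every} state of $\bOmega(\sigma_n,\calO_0)$ forward rather than just an optimal one, and keep the $\inf\emptyset=+\infty$ convention in mind before passing to the $\limsup$ and then the supremum over neighborhoods; no estimates beyond Propositions~\ref{prop:stability} and~\ref{prop:derivative} are needed.
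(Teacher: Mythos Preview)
Your proof is correct and follows essentially the same route as the paper's: fix a neighborhood of $\mu_t$, use Proposition~\ref{prop:stability} to produce a neighborhood of $\mu_0$ whose consistent states evolve into it for large $n$, invoke Proposition~\ref{prop:derivative} with $F_0\le 0$ to get the monotonicity of the finite free energy, and then pass to the $\limsup$ and the supremum over neighborhoods. The only cosmetic difference is that you work explicitly with $\bar d$-balls and quantitative choices of $\varepsilon_0$ and $N$, while the paper appeals to Proposition~\ref{prop:stability} more abstractly to obtain the time-$0$ neighborhood.
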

\begin{proof}
	If $\fed_\Sigma(\mu_0) = +\infty$ then the result is trivial, so suppose $\fed_\Sigma(\mu_0) < +\infty$. This means that for any $\calO \ni \mu_0$ we have $\mb\Omega(\sigma_n, \calO) \ne \varnothing$ for all large enough $n$.
	
	Let $\calU_t$ be an arbitrary weak-open neighborhood of $\mu_t$. By Proposition \ref{prop:stability} there exists $\calU_0 \ni \mu_0$ such that, for all large enough $n$, we have $\zeta_t \in \mb\Omega(\sigma_n,\calU_t)$ whenever $\zeta_0 \in \mb\Omega(\sigma_n, \calU_0)$. 
	
	Suppose $n$ is large enough that $\mb\Omega(\sigma_n, \calU_0) \ne \varnothing$. Since $F_0 \leq 0$, the previous proposition implies that for any $\zeta_0 \in \Prob(\A^{V_n})$ and any $t > 0$
		\[ \frac{d}{dt} \big[ \zeta_t(U) - \shent(\zeta_t) \big] \leq 0 . \]
	Therefore for any $t \geq 0$
		\[ \zeta_0(U) - \shent(\zeta_0) \geq \zeta_t(U) - \shent(\zeta_t), \]
	and hence
		\[ \inf_{ \zeta \in \mb\Omega(\sigma_n, \calU_0)} \big[ \zeta(U) - \shent(\zeta) \big] \geq \inf_{ \zeta \in \mb\Omega(\sigma_n, \calU_t)} \big[ \zeta(U) - \shent(\zeta) \big] . \]
	Now by definition of $\fed_\Sigma$ we have
	\begin{align*}
		\fed_\Sigma(\mu_0)
			&= \sup_{ \calO \ni \mu_0} \limsup_{n \to \infty} \frac{1}{\abs{V_n}}  \inf_{ \zeta \in \mb\Omega(\sigma_n, \calO)} \big[ \zeta(U) - \shent(\zeta) \big] \\
			&\geq \limsup_{n \to \infty} \frac{1}{\abs{V_n}}  \inf_{ \zeta \in \mb\Omega(\sigma_n, \calU_0)} \big[ \zeta(U) - \shent(\zeta) \big] \\
			&\geq \limsup_{n \to \infty} \frac{1}{\abs{V_n}}  \inf_{ \zeta \in \mb\Omega(\sigma_n, \calU_t)} \big[ \zeta(U) - \shent(\zeta) \big] .
	\end{align*}
	Taking the supremum over $\calU_t$ gives the result.
\end{proof}

By a more careful analysis we can get the following, which may be interpreted as semicontinuity of the time derivative of $\fed_\Sigma(\mu_t)$ as a function of the measure:

\begin{prop}
\label{prop:strictdecrease}
	Suppose $\mu \in \Prob^\Gamma(\A^\Gamma)$ is not Gibbs, and let $\Sigma = \big( \sigma_n \in \Hom(\Gamma, \Sym(V_n)) \big)_{n \in \NN}$ be such that $\Delta^{\sigma_n} \to 0$. Then there exist $\delta,T>0$ and an open neighborhood $\calO \ni \mu$ such that if $\mu_0 \in \calO$ then $\fed_\Sigma(\mu_0) \geq \fed_\Sigma(\mu_t) + \delta t$ for all $t \in [0,T]$.
\end{prop}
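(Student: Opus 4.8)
The plan is to reduce everything to a finite-volume statement. Write, for $\zeta \in \Prob(\A^V)$ of full support,
\[ G(\zeta) \;:=\; -\sum_{\mb{x},v,\ta} F_0\!\left( \frac{\exp\{-\Phi_v(\mb{x})\}}{\exp\{-\Phi_v(\mb{x}^{v\to\ta})\}}\,\frac{\zeta\{\mb{x}^{v\to\ta}\}}{\zeta\{\mb{x}\}} \right) \zeta\{\mb{x}\}\, c_v(\mb{x},\ta) \;\ge\; 0, \]
which by Proposition~\ref{prop:derivative} equals $-\tfrac{d}{dt}[\zeta_t(U)-\shent(\zeta_t)]$ at the state $\zeta$. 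I claim that if $\mu$ is not Gibbs there are a weak neighborhood $\calV \ni \mu$, a constant $\delta > 0$, and $N \in \NN$ with $\tfrac{1}{\abs{V_n}} G(\zeta) \ge \delta$ whenever $n \ge N$ and $\zeta \in \bOmega(\sigma_n, \calV)$ has full support. Granting this, the proof finishes as in Proposition~\ref{prop:nonincreasing}: by Proposition~\ref{prop:stability} (and strong continuity of the Glauber semigroup, which keeps $\mu_s$ near $\mu$ for small $s$) one picks $T>0$ and $\calO \ni \mu$ so that for $\mu_0 \in \calO$, a suitable $\calU_0 \ni \mu_0$, and $n$ large, every $\zeta_0 \in \bOmega(\sigma_n,\calU_0)$ has $\zeta_s \in \bOmega(\sigma_n,\calV)$ for all $s \in (0,T]$ (the $\zeta_s$ being full-support for $s>0$ by Proposition~\ref{prop:derivative}); integrating the rate bound yields $[\zeta_0(U)-\shent(\zeta_0)] - [\zeta_t(U)-\shent(\zeta_t)] \ge \delta \abs{V_n} t$ on $[0,T]$; and shrinking $\calU_0$ further so that also $\zeta_t \in \bOmega(\sigma_n, \calO_t)$ for a given $\calO_t \ni \mu_t$, then taking $\inf$ over $\zeta_0 \in \bOmega(\sigma_n,\calU_0)$, then $\limsup_n$, then $\sup$ over $\calO_t$, gives $\fed_\Sigma(\mu_t) \le \fed_\Sigma(\mu_0) - \delta t$ (the case $\fed_\Sigma(\mu_0) = +\infty$ being trivial).

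For the finite-volume claim the obstruction is that $G(\zeta)$ sees $\zeta$ only through the \emph{global} conditional ratios $\zeta\{\mb{x}^{v\to\ta}\}/\zeta\{\mb{x}\}$, whereas $\bOmega(\sigma_n,\calV)$ controls only \emph{local} marginals. I would get around this by rewriting each summand of $G$ as a relative entropy and then projecting. Using $-F_0(s) = 1 - s + s\log s$ with the identities $\sum_{\mb{x},\ta} \zeta\{\mb{x}\} c_v(\mb{x},\ta) = 1$ and $\sum_{\mb{x},\ta} r_{\mb{x},v,\ta}\, \zeta\{\mb{x}\} c_v(\mb{x},\ta) = 1$ (the second via the same detailed-balance relation between $c_v$ and $U$ used in the proof of Proposition~\ref{prop:derivative}), the $v$-th summand of $G$ equals the relative entropy $D_{\mathrm{KL}}\!\big( (T_v)_* p_v(\zeta) \,\big\|\, p_v(\zeta) \big)$, where $p_v(\zeta)(\mb{x},\ta) = \zeta\{\mb{x}\} c_v(\mb{x},\ta)$ is a probability measure on $\A^{V_n} \times \A$ and $T_v$ is the involution $(\mb{x},\ta) \mapsto (\mb{x}^{v\to\ta}, \mb{x}(v))$. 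Project both measures onto the coordinates $(\mb{x}\!\restriction_{\ball[\sigma_n]{v}{R}}, \ta)$: since $c_v$ and $T_v$ only involve $\ball[\sigma_n]{v}{1} \subseteq \ball[\sigma_n]{v}{R}$, both projections depend only on the marginal $\zeta_v^R$ of $\zeta$ on $\ball[\sigma_n]{v}{R}$, and the projection commutes with $T_v$; so by the data-processing inequality for relative entropy followed by Pinsker's inequality, the $v$-th summand is at least twice the squared total-variation distance between the two projected measures. For the vertices $v$ with $\ball[\sigma_n]{v}{R} \cong \ball[\Gamma]{e}{R}$, transporting along the isomorphism $\gamma \mapsto \sigma^\gamma v$ of $\ball[\Gamma]{e}{R}$ onto $\ball[\sigma_n]{v}{R}$ identifies this with $g_R(\hat\zeta_v^R)$, where $\hat\zeta_v^R \in \Prob(\A^{\ball{e}{R}})$ is $\zeta_v^R$ carried to $\ball[\Gamma]{e}{R}$ (equivalently, the $\ball{e}{R}$-marginal of the $v$-th summand of $P_\zeta^{\sigma_n}$) and $g_R(\nu) := 2\,\norm{ T_* p(\nu) - p(\nu) }_{\TV}^2$ with $p(\nu)(\mb{y},\ta) = \nu\{\mb{y}\} c_e(\mb{y},\ta)$ and $T(\mb{y},\ta) = (\mb{y}^{e\to\ta}, \mb{y}(e))$. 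Since every summand of $G$ is nonnegative, $G(\zeta) \ge \sum_{v :\, \ball[\sigma_n]{v}{R} \cong \ball[\Gamma]{e}{R}} g_R(\hat\zeta_v^R)$.

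The function $g_R : \Prob(\A^{\ball{e}{R}}) \to [0,\infty)$ is continuous and, crucially, \emph{convex}, being a squared norm composed with the affine map $\nu \mapsto T_* p(\nu) - p(\nu)$, and $g_R(\nu) = 0$ iff $p(\nu)$ is $T$-invariant. A short computation (needing no full-support hypothesis, which is why $g_R$ serves better here than $\Delta_\ta^R$) shows $p(\nu)$ is $T$-invariant iff $c_e$ is a version of the conditional probability of $\{\mb{x}(e)=\cdot\}$ given $\scrS_{\ball{e}{R}\setminus\{e\}}$ under any measure with $\ball{e}{R}$-marginal $\nu$. Hence if $p(\mu_R)$ were $T$-invariant for \emph{every} $R \ge 1$, martingale convergence would make $c_e$ a version of $\mu(\{\mb{x}(e)=\cdot\} \mid \tail_e)$ and, by translation invariance, $\mu$ would be Gibbs. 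Since $\mu$ is not Gibbs, fix $R$ with $\epsilon_0 := g_R(\mu_R) > 0$ and a weak neighborhood $\calW \ni \mu_R$ on which $g_R \ge \epsilon_0/2$. With $W_n = \{ v : \ball[\sigma_n]{v}{R} \cong \ball[\Gamma]{e}{R} \}$ we have $\abs{W_n}/\abs{V_n} = 1 - \delta_R^{\sigma_n} \to 1$, and the $\ball{e}{R}$-marginal of $P_\zeta^{\sigma_n}$ differs from $\abs{W_n}^{-1}\sum_{v \in W_n} \hat\zeta_v^R$ by a signed measure of total mass $O(\delta_R^{\sigma_n})$, so for $\calV \ni \mu$ small enough and $n$ large, $\zeta \in \bOmega(\sigma_n,\calV)$ forces $\bar\zeta_n := \abs{W_n}^{-1}\sum_{v\in W_n}\hat\zeta_v^R \in \calW$. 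By Jensen and convexity,
\[ \frac{1}{\abs{V_n}} G(\zeta) \;\ge\; \frac{\abs{W_n}}{\abs{V_n}}\cdot\frac{1}{\abs{W_n}}\sum_{v\in W_n} g_R(\hat\zeta_v^R) \;\ge\; \frac{\abs{W_n}}{\abs{V_n}}\, g_R(\bar\zeta_n) \;\ge\; \frac{\abs{W_n}}{\abs{V_n}}\cdot\frac{\epsilon_0}{2} \;\ge\; \frac{\epsilon_0}{4} \]
for $n$ large, giving the finite-volume claim with $\delta = \epsilon_0/4$.

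I expect the main obstacle to be exactly this passage from the globally-defined rate $G(\zeta)$ to a bound depending only on local marginals: it works only because the $-F_0$-weighted sums are relative entropies (so data-processing and Pinsker let one pass to local marginals in the correct direction) and because the resulting local functional $g_R$ is convex (so Jensen upgrades "the average local marginal is near $\mu_R$" to a genuine on-average lower bound). The rest is bookkeeping — the relative-entropy rewriting of $G$, checking that the projection annihilates exactly the nonlocal dependence while commuting with $T_v$, and threading the neighborhoods and time bounds through Proposition~\ref{prop:stability}.
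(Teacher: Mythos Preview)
Your proof is correct and takes a genuinely different route from the paper's. The paper works directly with the concave function $F_0$: after restricting to good vertices and pulling out the uniform lower bound $s$ on $c_v$, it groups the double sum over $(\mb{x},v)$ by the local pattern $\mb{y} = \mb{x}\!\restriction_{\ball{v}{R}}$ and applies Jensen's inequality once to each group, which collapses both the passage from global ratios $\zeta_t\{\mb{x}^{v\to\ta}\}/\zeta_t\{\mb{x}\}$ to local ratios $\widetilde{P}\{\mb{y}^{e\to\ta}\}/\widetilde{P}\{\mb{y}\}$ \emph{and} the average over $v$ in a single stroke; it then picks one bad pair $(\mb{y},\ta)$ and argues by continuity, treating the full-support and non-full-support cases of $\mu_R$ separately. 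Your approach instead factors the same passage into a chain of named inequalities --- rewriting each $v$-summand as $D_{\mathrm{KL}}((T_v)_*p_v\|p_v)$, localizing via data-processing, dropping to the squared-TV functional $g_R$ via Pinsker, and finally averaging over $v$ via convexity of $g_R$. This is more modular and has the pleasant feature of handling the full-support and non-full-support cases uniformly through the $T$-invariance characterization of $g_R=0$, at the price of a longer chain and a somewhat weaker constant. The endgame (threading neighborhoods through Proposition~\ref{prop:stability} and passing to $\fed_\Sigma$) is essentially the same in both arguments.
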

Here we take the convention that $+\infty+\delta t = +\infty$.

\begin{proof}
	Since $\mu$ is not Gibbs, there exists $R$ such that either $\mu_R$ does not have full support or $\Delta_\ta^R(\mu_R) < 0$ for some $\ta \in \A$. We will come back to these two cases later, but for now let $R$ be fixed so that one of them occurs. We may assume $R \geq 1$.
	
	Let
		\[ s = \min \left\{ \frac{\exp\{-\Phi_e(\mb{x})\}}{\sum_{\tb \in \A} \exp\{-\Phi_e(\mb{x}^{e \to \tb})\}} \st \mb{x} \in \A^{\ball{e}{1}} \right\} > 0 , \]
	and call $v \in V$ \emph{good} if $\ball[\sigma]{v}{R} \cong \ball[\Gamma]{e}{R}$, and let $V'$ be the set of such $v$.
	Then
		\[ \frac{d}{dt} \big[ \zeta_t(U) - \shent(\zeta_t) \big]
			\leq s \sum_{\substack{\mb{x}, v, \ta \\ v \text{ good}}} F_0 \left( \frac{\exp\{ - \Phi_v(\mb{x})\}}{\exp\{ - \Phi_v(\mb{x}^{v \to \ta})\}} \frac{\zeta_t \{\mb{x}^{v \to \ta}\}}{\zeta_t \{\mb{x}\}}\right) \zeta_t \{\mb{x}\} . \]
	Let $\widetilde{P_\zeta^{\sigma, R}} \in \Prob(\A^{\ball[\Gamma]{e}{R}})$ be given by
		\[ \widetilde{P_\zeta^{\sigma, R}} \{\mb{y}\} = \frac{1}{\abs{V'}} \sum_{\substack{ \mb{x}, v \\ v \text{ good} \\ \mb{x} \restriction_{\ball{v}{R}} = \mb{y}}} \zeta \{\mb{x}\} \]
	Note that $\widetilde{P_\zeta^{\sigma, R}}$ is close to the $\ball[\Gamma]{e}{R}$-marginal of $P_\zeta^{\sigma}$ in total variation distance if most vertices are good.
	Then, applying Jensen's inequality,
	\begin{align*}
		\frac{d}{dt} \big[ \zeta_t(U) - \shent(\zeta_t) \big]
			&\leq s \sum_{\ta \in \A} \sum_{\mb{y} \in \A^{\ball[\Gamma]{e}{R}}} \sum_{\substack{\mb{x}, v \\ v \text{ good} \\ \mb{x}\restriction_{\ball{v}{R}} = \mb{y} }} F_0 \left( \frac{\exp\{ - \Phi_v(\mb{x})\}}{\exp\{ - \Phi_v(\mb{x}^{v \to \ta})\}} \frac{\zeta_t \{\mb{x}^{v \to \ta}\}}{\zeta_t \{\mb{x}\}}\right) \zeta_t \{\mb{x}\} \\
			&\leq s \abs{V'} \sum_{\ta \in \A} \sum_{\mb{y} \in \A^{\ball[\Gamma]{e}{R}}} \widetilde{P_{\zeta_t}^{\sigma,R}} \{\mb{y}\} \cdot F_0 \left( \frac{\exp\{ - \Phi_e(\mb{y})\}}{\exp\{ - \Phi_e(\mb{y}^{v \to \ta})\}} \frac{\widetilde{P_{\zeta_t}^{\sigma,R}}\{\mb{y}^{e \to \ta}\}}{\widetilde{P_{\zeta_t}^{\sigma,R}}\{\mb{y}\}} \right) .
	\end{align*}
	
	If $\mu_R$ does not have full support, there exist $\mb{y} \in \A^{\ball{e}{R}}$, $v \in \ball{e}{R}$ and $\ta \in \A$ such that $\mu_R\{\mb{y}\} \ne 0$ but $\mu_R\{\mb{y}^{v \to \ta}\} = 0$. Using translation-invariance of $\mu$, we may assume $v = e$. Then
		\[ \mu_{R}\{\mb{y}\} \cdot F_0 \left( \frac{\exp\{ - \Phi_e(\mb{y})\}}{\exp\{ - \Phi_e(\mb{y}^{e \to \ta})\}} \frac{\mu_R \{\mb{y}^{e \to \ta}\}}{\mu_R \{\mb{y}\}}\right) = - \mu_R \{\mb{y}\} < 0 . \]
	By continuity of $F_0$, there exists $\varepsilon>0$ such that 
		\[  \abs{a - \mu_{R}\{\mb{y}\}} < \varepsilon,\ 0 \leq b < \varepsilon  \ \Rightarrow \ a \cdot F_0 \left( \frac{\exp\{ - \Phi_e(\mb{y})\}}{\exp\{ - \Phi_e(\mb{y}^{e \to \ta})\}} \frac{b}{a}\right) < -\frac{\mu_R\{\mb{y}\}}{2} . \]
	In particular, if $\norm{\widetilde{P_{\zeta_t}^{\sigma,R}} - \mu_R}_{\TV} < \varepsilon$ then
		\[ \frac{d}{dt} \big[ \zeta_t(U) - \shent(\zeta_t) \big] < - s \abs{V'} \frac{\mu_R\{\mb{y}\}}{2} . \]
	Let $\calO_1 = \{ \nu \in \Prob(\A^\Gamma) \st \norm{\nu_R - \mu_R}_{\TV} < \varepsilon \}$. By continuity of $(\mu_0,t) \mapsto \mu_t$, we can pick $\calO, T$ such that if $\mu_0 \in \calO$ then $\mu_t \in \calO_1$ for all $t \in [0,T]$.
	
	Fix $\mu_0 \in \calO$. By Proposition \ref{prop:stability}, for any $\eta > 0$ there exists $\calU \ni \mu_0$ such that for all large enough $n$ if $\zeta_0 \in \mathbf{\Omega}(\sigma_n,\calU)$ then $\bar{d}(P_{\zeta_t}^{\sigma_n}, \mu_t) < \eta$ for all $t \in [0,T]$. If $\eta$ is small enough, this implies $\zeta_t \in \mathbf{\Omega}(\sigma_n,\calO_1)$ for all $t \in [0,T]$. Hence for all large enough $n$ if $\zeta_0 \in \mathbf{\Omega}(\sigma_n,\calU)$ then for any $t \in [0,T]$
		\[ \big[ \zeta_t(U) - \shent(\zeta_t) \big] - \big[ \zeta_0(U) - \shent(\zeta_0) \big] \leq - s \abs{V_n'} \frac{\mu_R\{\mb{y}\}}{2} t \]
	so
	\begin{align*}
		\big[ \zeta_0(U) - \shent(\zeta_0) \big]
			&\geq \big[ \zeta_t(U) - \shent(\zeta_t) \big] + s \abs{V_n'} \frac{\mu_R\{\mb{y}\}}{2} t \\
			&\geq \inf_{\zeta \in \mathbf{\Omega}(\sigma_n, \ball{\mu_t}{\eta})} \big[ \zeta(U) - \shent(\zeta) \big] + s \abs{V_n'} \frac{\mu_R\{\mb{y}\}}{2} t.
	\end{align*}
	Then, using that $\abs{V_n'}/\abs{V_n} \to 1$, and that the first limit in the definition of $\fed_\Sigma$ is a supremum
	\begin{align*}
		\fed_\Sigma(\mu_0)
			&\geq \limsup_{n \to \infty} \inf_{\zeta_0 \in \mathbf{\Omega}(\sigma_n, \calU)} \frac{1}{\abs{V_n}}\big[ \zeta_0(U) - \shent(\zeta_0)\big] \\
			& \geq \limsup_{n \to \infty} \inf_{\zeta \in \mathbf{\Omega}(\sigma_n, \ball{\mu_t}{\eta})} \frac{1}{\abs{V_n}}\big[ \zeta(U) - \shent(\zeta) \big] + s \frac{\mu_R\{\mb{y}\}}{2} t .
	\end{align*}
	Taking $\eta$ to zero gives the result in this case, with $\delta = s \frac{\mu_R\{\mb{y}\}}{2}$.
	
	If instead $\mu_R$ does have full support, then $\Delta_\ta^R(\mu_R) < 0$ for some $\ta$, so we can pick $\mb{y}$ such that 
		\[ \mu_{R}\{\mb{y}\} \cdot F_0 \left( \frac{\exp\{ - \Phi_e(\mb{y})\}}{\exp\{ - \Phi_e(\mb{y}^{e \to \ta})\}} \frac{\mu_R \{\mb{y}^{e \to \ta}\}}{\mu_R \{\mb{y}\}}\right) < 0  \]
	and proceed in the same way.
\end{proof}

\begin{proof}[Proof of Theorem]
	Suppose for the sake of contradiction that $\mu_t$ does not converge to the set of Gibbs measures; then we can pick some $\nu \in \Prob^\Gamma(\A^\Gamma)$ which is a limit point of $\{ \mu_t \st t \geq 0 \}$ but not a Gibbs measure.
		
	By Proposition~\ref{prop:strictdecrease}, we can pick $\delta, T>0$ and an open neighborhood $\calO \ni \nu$ such that for every $t$ with $\mu_t \in \calO$ we have $\fed_\Sigma(\mu_t) \geq \fed_\Sigma(\mu_{t+T}) + \delta T$.
	
	On the other hand, under the assumption that $\fed_\Sigma(\mu_s) < +\infty$, the set $\{ \fed_\Sigma(\mu_t) \st t \geq s \}$ is bounded: an upper bound is $\fed_\Sigma(\mu_s)$ by Proposition~\ref{prop:nonincreasing}, and a lower bound is $u^{\min} - \log \abs{\A}$. This is a contradiction, so the theorem follows.
\end{proof}


\section{Connection to property PA}
\label{sec:PAequiv}

We first prove the following:

\begin{prop}
	A group $\Gamma$ has property \PA\ if and only if for any $\mu \in \Prob^\Gamma(\A^\Gamma)$ there exists a sequence $\Sigma = \big( \sigma_n \in \Hom(\Gamma, \Sym(V_n)) \big)_{n \in \NN}$ and a sequence $(\mb{x}_n \in \A^{V_n})_{n \in \NN}$ with $P_{\mb{x}_n}^{\sigma_n} \xrightarrow{wk} \mu$ and $\Delta^{\sigma_n} \to 0$.
\end{prop}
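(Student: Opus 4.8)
The ``if'' direction requires almost nothing: each empirical distribution $P_{\mb{x}_n}^{\sigma_n}$ is shift-invariant (as observed in Section~\ref{sec:freeenergy}) and is supported on the at most $\abs{V_n}$ points $\Pi_v^{\sigma_n}\mb{x}_n$, so it is a periodic measure; hence if every $\mu\in\Prob^\Gamma(\A^\Gamma)$ is a weak limit of empirical distributions, then periodic measures are weakly dense, for each finite $\A$, which is exactly property \PA. (The condition $\Delta^{\sigma_n}\to 0$ plays no role here.)

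For the ``only if'' direction I would first extract from property \PA\ that $\Gamma$ is residually finite, i.e.\ $N:=\bigcap\{\Lambda\st[\Gamma:\Lambda]<\infty\}=\{e\}$. If instead some $n_0\in N\setminus\{e\}$, take $\A=\{0,1\}$: every $\mb{y}\in\A^\Gamma$ with finite $\Gamma$-orbit has finite-index stabilizer, which therefore contains $N$ and hence $n_0$, so $\mb{y}(\gamma n_0)=\mb{y}(\gamma)$ for all $\gamma$; thus every periodic measure --- and so every weak limit of periodic measures --- gives the clopen set $\{\mb{y}\st\mb{y}(e)=\mb{y}(n_0)\}$ mass $1$, while the Bernoulli measure $(\tfrac12\delta_0+\tfrac12\delta_1)^{\otimes\Gamma}$ gives it mass $\tfrac12$, contradicting \PA.

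Now fix $\mu$. Using \PA, together with the (easy) density of rational convex combinations of uniform measures on finite orbits among all periodic measures, choose periodic $\nu_k\to\mu$ weakly with $\nu_k=\sum_{j=1}^{J_k}(m_j/M_k)\Unif(O_j)$, the $O_j\subseteq\A^\Gamma$ distinct finite $\Gamma$-orbits and $m_j,M_k\in\NN$, $\sum_j m_j=M_k$. For each $k$, fix representatives $\mb{y}_j\in O_j$ and choose a finite-index normal $\Lambda\trianglelefteq\Gamma$ with $\Lambda\le\Stab(\mb{y}_j)$ for all $j$ (e.g.\ the normal core of $\bigcap_j\Stab(\mb{y}_j)$); then, by residual finiteness, shrink $\Lambda$ further (intersecting it with a sufficiently deep term of a chain of finite-index normal subgroups with trivial intersection) so that $\Lambda\cap\ball[\Gamma]{e}{2R_k+1}=\{e\}$ for some prescribed $R_k\to\infty$. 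Let $\sigma\in\Hom(\Gamma,\Sym(\Gamma/\Lambda\times\{1,\dots,M_k\}))$ act by left multiplication on the first coordinate, so that its graph is a disjoint union of copies of the Schreier graph of $\Gamma/\Lambda$; the condition on $\Lambda$ gives $\ball[\sigma]{v}{R_k}\cong\ball[\Gamma]{e}{R_k}$ for \emph{every} $v$, hence $\delta_{R_k}^\sigma=0$ and $\Delta^\sigma\le 9(2/3)^{R_k}$. Let $\mb{x}$ be the microstate which, on the block $\Gamma/\Lambda\times\{i\}$ for the $m_j$ indices $i$ allotted to $j$, equals the descent to $\Gamma/\Lambda$ of $\mb{y}_j$. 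A short computation from $(\Pi_v^\sigma\mb{x})(\gamma)=\mb{x}(\sigma^\gamma v)$ shows that $\Pi_{(\gamma_0\Lambda,\,i)}^\sigma\mb{x}=\gamma_0\mb{y}_j$, so restricted to one copy of $\Gamma/\Lambda$ the map $v\mapsto\Pi_v^\sigma\mb{x}$ surjects onto $O_j$ with all fibres of size $[\Stab(\mb{y}_j):\Lambda]$; summing the contributions of all blocks gives $P_{\mb{x}}^\sigma=\nu_k$ exactly. A diagonal choice of these $\sigma$ and $\mb{x}$ over $k$, with $R_k\to\infty$, then yields $\Sigma$ and $(\mb{x}_n)$ with $P_{\mb{x}_n}^{\sigma_n}=\nu_{k(n)}\to\mu$ and $\Delta^{\sigma_n}\to 0$.

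The crux is this last construction --- realizing a prescribed periodic measure as an empirical distribution over a homomorphism with small $\Delta^\sigma$. The resolution is conceptual rather than computational: a periodic measure is already visible on a fixed finite quotient of $\Gamma$, so we may pull its realization back to an arbitrarily deep finite quotient $\Gamma/\Lambda$, whose Schreier graph has large injectivity radius; the availability of such deep quotients is exactly residual finiteness, which is why it had to be distilled from \PA\ first.
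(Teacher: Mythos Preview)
Your argument is correct, but the route you take for the ``only if'' direction is genuinely different from the paper's. You first extract residual finiteness from property \PA\ (via the Bernoulli-measure obstruction), and then for each periodic approximant $\nu_k$ you realize it over a deep finite quotient $\Gamma/\Lambda$: choosing $\Lambda$ normal, inside every $\Stab(\mb{y}_j)$, and avoiding a large ball forces $\delta_{R_k}^\sigma=0$ for \emph{every} vertex, so the injectivity radius is controlled explicitly. The paper instead takes $V_n$ to be a disjoint union of copies of the orbits $\Gamma\mb{y}_i$ themselves with the shift action, which realizes $\nu_n$ but gives no direct control on $\Delta^{\sigma_n}$; to force approximate freeness it applies that same construction once more to $\mu\times\nu$ with $\nu$ the Bernoulli $(\tfrac12,\tfrac12)$ measure on $\{0,1\}^\Gamma$, and then shows that $P_{\mb{y}_n}^{\sigma_n}\to\nu$ alone implies $\delta_R^{\sigma_n}\to0$ for each $R$. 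So both proofs hinge on the Bernoulli measure, but you use it up front to deduce residual finiteness and then work with Schreier graphs, while the paper embeds it into a product and lets the empirical-distribution machinery do the work. Your approach gives the cleaner quantitative statement ($\delta_{R_k}^\sigma=0$ rather than $\to0$); the paper's avoids the separate residual-finiteness lemma and the bookkeeping of passing to a common $\Lambda$.
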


Some ideas for this proof were shared with me by Lewis Bowen.

\begin{proof}
	The `if' direction is clear, since each $P_{\mb{x}_n}^{\sigma_n}$ is periodic.
	
	For the other direction, suppose $\Gamma$ has property \PA\ and let $\mu \in \Prob^\Gamma(\A^\Gamma)$. By definition of property \PA, we can pick a sequence of periodic measures $(\mu_n)_{n \in \NN}$ converging to $\mu$.
	
	Fix $n \in \NN$. The support of $\mu_n$ consists of finitely many orbits under $\Gamma$; let $\{\mb{y}_1, \ldots, \mb{y}_k\} \subset \A^\Gamma$ be a set which contains exactly one element of each orbit, and denote the (finite) orbits by $\Gamma \mb{y}_i$. Then we can write
		\[ \mu_n = \sum_{i=1}^k a_i \Unif( \Gamma \mb{y}_i) . \]
	
	Pick natural numbers $m_1, \ldots, m_k$, and let $V_n$ be the disjoint union of $m_i$ copies of $ \Gamma \mb{y}_i$ for each $i$. Let $\sigma_n \in \Hom(G, \Sym(V_n))$ act separately on each copy of each orbit. Let $\mb{x}_n \in \A^{V_n}$ be given by
		\[ \mb{x}_n(v) = v(e). \]
	Then
		\[ P_{\mb{x}_n}^{\sigma_n} = \sum_{i=1}^k \frac{m_i}{\sum_{j=1}^k m_j} \Unif(\Gamma \mb{y}_i) , \]
	so if $m_1, \ldots, m_k$ are chosen appropriately then we can ensure $P_{\mb{x}_n}^{\sigma_n} \to \mu$.
	
	Now we need to show that we can ensure $\Delta^{\sigma_n} \to 0$. Let $\nu \in \Prob(\{0,1\}^\Gamma)$ be the product measure with uniform base. Then then above argument implies the existence of sequences $\big( \sigma_n \in \Hom(\Gamma, \Sym(V_n)) \big)_{n \in \NN}$ and $\mb{z}_n \in (\A \times \{0,1\})^{V_n}$ with $P_{\mb{z}_n}^{\sigma_n} \to \mu \times \nu$. If we write $\mb{z}_n = (\mb{x}_n, \mb{y}_n)$ with $\mb{x}_n \in \A^{V_n}$ and $\mb{y}_n \in \{0,1\}^{V_n}$, then $P_{\mb{x}_n}^{\sigma_n} \to \mu$ and $P_{\mb{y}_n}^{\sigma_n} \to \nu$. We will show that the latter fact implies $\Delta^{\sigma_n} \to 0$.	
	Suppose $v \in V_n$, $\gamma \in \Gamma$ are such that $\sigma_n^\gamma v = v$. Then for any $\beta \in \Gamma$,
		\[ \big( \Pi_v^{\sigma_n} \mb{y}_n \big)(\beta \gamma) = \mb{y}_n \big( \sigma_n^{\beta \gamma} v \big) = \mb{y}_n \big( \sigma_n^{\beta} v \big) = \big( \Pi_v^{\sigma_n} \mb{y}_n \big)(\beta) . \]
	In particular, for any finite set $D \subset \Gamma$ we have
		\[ P_{\mb{y}_n}^{\sigma_n} \{ \mb{w} \in \{0,1\}^\Gamma \st \mb{w}(\beta \gamma) = \mb{w}(\beta) \ \forall \beta \in D \}
			\geq \frac{1}{\abs{V_n}} \abs{\{v \in V_n \st \sigma_n^\gamma v = v\}} . \]
	But by assumption, as $n \to \infty$ the left-hand side converges to
	\begin{align*}
		\nu \{ \mb{w} \in \{0,1\}^\Gamma \st \mb{w}(\beta \gamma) = \mb{w}(\beta) \ \forall \beta \in D \}
		 	&\leq \nu \{ \mb{w} \st \mb{w}(\beta \gamma) = \mb{w}(\beta) \ \forall \beta \in D \text{ s.t. } \beta\gamma \not\in D \} \\
			&= 2^{-\abs{D \gamma \setminus D}} ,
	\end{align*}
	and hence
		\[ \limsup_{n \to \infty} \frac{1}{\abs{V_n}} \abs{\{v \in V_n \st \sigma_n^\gamma v = v\}} \leq 2^{-\abs{D \gamma \setminus D}} . \]
	As long as $\gamma \neq e$, the set $D$ can be chosen to make $\abs{D \gamma \setminus D}$ arbitrarily large, so that
		\[ \lim_{n \to \infty} \frac{1}{\abs{V_n}} \abs{\{v \in V_n \st \sigma_n^\gamma v = v\}} = 0 . \]
		
	For any $R \in \NN$, it can be checked that if $\sigma_n^\gamma v \neq v$ for all $\gamma \neq e$ such that $\abs{\gamma} \leq 2R +1$ then the map
	\begin{align*}
		\ball[\Gamma]{e}{R} &\to \ball[\sigma]{v}{R} \\
		\gamma &\mapsto \sigma^\gamma v
	\end{align*}
	is an isomorphism of the (labeled, directed) induced subgraphs. Therefore
		\[ \delta_R^{\sigma_n} \leq \sum_{\gamma \in \ball[\Gamma]{e}{2R+1} \setminus \{e\}} \frac{1}{\abs{V_n}} \abs{\{v \in V_n \st \sigma_n^\gamma v = v\}} \to 0 , \]
	which, since $R$ is arbitrary, implies $\Delta^{\sigma_n} \to 0$.
\end{proof}

\subsection{Proof of Proposition \ref{prop:PAequiv}}
Note $\fed_\Sigma(\mu) = +\infty$ if and only if there exists an open neighborhood $\calO \ni \mu$ such that $\bOmega(\sigma_n, \calO)$ is empty for infinitely many $n$. Therefore if $\fed_\Sigma(\mu) < +\infty$ there exists a sequence $\zeta_n \in \Prob(\A^{V_n})$ with $P_{\zeta_n}^{\sigma_n} \to \mu$. Since each $P_{\zeta_n}^{\sigma_n}$ is periodic, this shows that if for every $\mu$ there exists $\Sigma$ with $\fed_\Sigma(\mu) < +\infty$ then $\Gamma$ has property \PA.

Conversely, if $\Gamma$ has property \PA\ and $\mu \in \Prob^\Gamma(\A^\Gamma)$ is given, then by the above proposition we can pick $\Sigma$ and $(\mb{x}_n)_{n \in \NN}$ with $\Delta^{\sigma_n} \to 0$ and $P_{\mb{x}_n}^{\sigma_n} \to \mu$. But then for any open $\calO \ni \mu$ we have $\delta_{\mb{x}_n} \in \bOmega(\sigma_n, \calO)$ for all large enough $n$, so $\fed_\Sigma(\mu) < +\infty$.

\section{Proofs of statements involving infinitary dynamics}
\label{sec:infdynamics}
We first give some additional setup regarding the Glauber dynamics on $\A^\Gamma$. First, recall that on an infinite graph we must first define the Markov generator on a `core' of `smooth' functions. Let $C(\A^\Gamma)$ denote the space of continuous real-valued functions on $\A^\Gamma$, with the supremum norm $\norm{\cdot}_\infty$.
The smooth functions are defined as follows: given $f \in C(\A^\Gamma)$ and $v \in \Gamma$, let
	\[ \Delta_f(v) = \sup \{ \abs{f(\eta_1) - f(\eta_2)} \st \eta_1(u) = \eta_2(u) \ \forall u \ne v \} , \]
	\[ \nnorm{f} = \sum_{v \in \Gamma} \Delta_f(v) , \]
and
	\[ D(\A^\Gamma) = \{ f \in C(\A^\Gamma) \st \nnorm{f} < \infty \} . \]
Every function which depends on only finitely many coordinates is in $D(\A^\Gamma)$, so $D(\A^\Gamma)$ is dense in $C(\A^\Gamma)$. For every $f \in D(\A^\Gamma)$ the series defining $\Omega f$ converges absolutely, and $\Omega f \in C(\A^\Gamma)$.

Note that the condition $\nnorm{f} < \infty$ does not imply that $f$ is continuous; in fact for every tail-measurable $f$ we have $\nnorm{f} = 0$.

Continuing to follow mostly the notation from Liggett's book, let
	\[ c_u(v) = \sup \big\{ \norm{c_u(\eta_1, \cdot) - c_u(\eta_2, \cdot)}_{TV} \st \eta_1(\gamma) = \eta_2(\gamma) \ \forall \gamma \ne v \big\} . \]
Then
	\[ \Theta \beta (u) = \sum_{v \in \Gamma} \beta(v) c_u (v) \]
defines a bounded linear operator on $\ell^1(\Gamma)$.

The closure $\bar\Omega^\Gamma$ is a Markov generator, so its domain is a dense subset of the continuous functions $C(\A^\Gamma)$ and the range of $I-\lambda \bar\Omega^\Gamma$ is all of $C(\A^\Gamma)$ for all $\lambda \geq 0$. We also have $\norm{f} \leq \norm{(I-\lambda \bar\Omega^\Gamma)f}$ for all $\lambda \geq 0$ \cite[comment after Definition 2.1]{liggett2005}. In particular $I-\lambda \bar\Omega^\Gamma$ is injective. An important consequence is that we have a contraction $(I - \lambda \bar\Omega^\Gamma)^{-1} \colon C(\A^\Gamma) \to C(\A^\Gamma)$.

\subsection{Approximate equivariance (Proof of Theorem \ref{thm:Sequivariance})}

For $\beta \in \RR^\Gamma$, let
	\[ \norm{\beta} = \sup_{\gamma \in \Gamma} \abs{\beta(\gamma)} (3r)^{\abs{\gamma}} . \]

\begin{lemma}
\label{lem:nnormlipbound}
	For any continuous $g \colon \A^\Gamma \to \RR$,
		\[ \frac{1}{3} \nnorm{g} \leq \norm*{\Delta_g} = \abs{g}_{\Lip} . \]
\end{lemma}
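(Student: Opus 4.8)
The statement relates three quantities for a continuous $g \colon \A^\Gamma \to \RR$: the ``triple-norm'' $\nnorm{g} = \sum_{v \in \Gamma} \Delta_g(v)$, the quantity $\norm{\Delta_g} = \sup_{\gamma} \Delta_g(\gamma) (3r)^{\abs{\gamma}}$, and the Lipschitz constant $\abs{g}_{\Lip}$ with respect to the metric $d(\mb x, \mb y) = \sum_\gamma (3r)^{-\abs{\gamma}} \1_{\mb x(\gamma) \ne \mb y(\gamma)}$. The plan is to prove the equality $\norm{\Delta_g} = \abs{g}_{\Lip}$ first, and then the inequality $\tfrac{1}{3}\nnorm{g} \le \norm{\Delta_g}$ by relating the sum over $\Gamma$ of $(3r)^{-\abs{\gamma}}$-weights to the geometric factor $1/3$ coming from the $(3r)$ in the metric versus the at-most-$2r$ branching of the Cayley graph.

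First I would prove $\abs{g}_{\Lip} \le \norm{\Delta_g}$. Given $\mb x, \mb y \in \A^\Gamma$ with $d(\mb x, \mb y) < \infty$, enumerate the (at most countable) set $\{\gamma : \mb x(\gamma) \ne \mb y(\gamma)\}$ and interpolate: build a sequence of configurations changing one coordinate at a time from $\mb x$ to $\mb y$. Each single-coordinate change at $\gamma$ contributes at most $\Delta_g(\gamma) \le \norm{\Delta_g} (3r)^{-\abs{\gamma}}$ to the change in $g$, and the sum of these bounds over all differing $\gamma$ is exactly $\norm{\Delta_g} \cdot d(\mb x, \mb y)$. A small care point: if there are infinitely many differing coordinates one should order them by increasing $\abs{\gamma}$ so the telescoping sum converges and continuity of $g$ lets us pass to the limit. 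For the reverse inequality $\norm{\Delta_g} \le \abs{g}_{\Lip}$, note that if $\eta_1, \eta_2$ differ only at $v$ then $d(\eta_1, \eta_2) = (3r)^{-\abs{v}}$, so $\abs{g(\eta_1) - g(\eta_2)} \le \abs{g}_{\Lip} (3r)^{-\abs{v}}$; taking the supremum over such pairs gives $\Delta_g(v) \le \abs{g}_{\Lip}(3r)^{-\abs{v}}$, i.e. $\Delta_g(v)(3r)^{\abs{v}} \le \abs{g}_{\Lip}$, and then the supremum over $v$ gives the claim.

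For the inequality $\tfrac{1}{3}\nnorm{g} \le \norm{\Delta_g}$, I would simply bound $\Delta_g(v) \le \norm{\Delta_g}\,(3r)^{-\abs{v}}$ for each $v$ and sum:
\[
\nnorm{g} = \sum_{v \in \Gamma} \Delta_g(v) \le \norm{\Delta_g} \sum_{v \in \Gamma} (3r)^{-\abs{v}}.
\]
The number of $v \in \Gamma$ with $\abs{v} = k$ is at most $2r(2r-1)^{k-1} \le (2r)^k$ for $k \ge 1$ (and $1$ for $k = 0$), so
\[
\sum_{v \in \Gamma} (3r)^{-\abs{v}} \le 1 + \sum_{k \ge 1} (2r)^k (3r)^{-k} = 1 + \sum_{k \ge 1} (2/3)^k = 1 + 2 = 3.
\]
Hence $\nnorm{g} \le 3\norm{\Delta_g}$, which is the desired bound.

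The main obstacle is the interpolation argument in the first inequality when $\mb x$ and $\mb y$ differ at infinitely many coordinates: one must check that the intermediate configurations converge (in the product topology) to $\mb y$, that $g$ being continuous yields convergence of $g$ along the sequence, and that the bound $\sum_\gamma \Delta_g(\gamma)\1_{\mb x(\gamma) \ne \mb y(\gamma)}$ is genuinely an upper bound for $\abs{g(\mb x) - g(\mb y)}$ — this is where one uses that $d(\mb x, \mb y) < \infty$ forces $\abs{\gamma} \to \infty$ along the differing coordinates, so ordering by $\abs{\gamma}$ makes the tail negligible. Everything else is a direct computation with the geometric series above.
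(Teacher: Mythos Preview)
Your proposal is correct and follows essentially the same route as the paper: the paper bounds $\nnorm{g}$ by $\norm{\Delta_g}\sum_\gamma (3r)^{-\abs{\gamma}} \le 3\norm{\Delta_g}$ via the same geometric series, proves $\abs{g}_{\Lip} \le \norm{\Delta_g}$ by the same one-coordinate-at-a-time interpolation (compressed there to the phrase ``using continuity''), and gets $\norm{\Delta_g} \le \abs{g}_{\Lip}$ exactly as you do. One minor remark: your hypothesis ``$d(\mb x,\mb y) < \infty$'' is vacuous here since $d$ is bounded by $3$, so the only genuine care point is the continuity/convergence issue you already flagged when infinitely many coordinates differ.
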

In particular, every Lipschitz function is in $D(\A^\Gamma)$.

\begin{proof}
	For the inequality:
		\[ \nnorm{g} = \sum_{\gamma} \Delta_g(\gamma) \leq \left(\sup_{\gamma} \Delta_g(\gamma) (3r)^{\abs{\gamma}} \right) \sum_{\gamma} (3r)^{-\abs{\gamma}} \leq 3 \sup_{\gamma} \Delta_g(\gamma) (3r)^{\abs{\gamma}} .\]
	Now similarly, for any $\mathbf{x},\mathbf{y} \in \A^\Gamma$
	\begin{align*}
		\abs{g(\mathbf{x}) - g(\mathbf{y})} &\leq \sum_{\gamma \st \mathbf{x}(\gamma) \ne \mathbf{y}(\gamma)} \Delta_g(\gamma) \tag{using continuity} \\
		&\leq \sup_{\gamma} \Delta_g(\gamma) (3r)^{\abs{\gamma}} \sum_{\gamma \st \mathbf{x}(\gamma) \ne \mathbf{y}(\gamma)} (3r)^{-\abs{\gamma}} \\
		&= \sup_{\gamma} \Delta_g(\gamma) (3r)^{\abs{\gamma}} \cdot d(\mathbf{x},\mathbf{y}),
	\end{align*}
	so
		\[ \abs{g}_{\Lip} \leq \sup_{\gamma} \Delta_g(\gamma) (3r)^{\abs{\gamma}} = \norm{\Delta_g} . \]
	The converse inequality follows from the fact that for any $\gamma \in \Gamma$
		\[ \Delta_g(\gamma) \leq (3r)^{-\abs{\gamma}} \abs{g}_{\Lip} . \qedhere \]
\end{proof}

\begin{lemma}
	With respect to the norm $\norm{\cdot}$ on $\RR^\Gamma$, $\Theta$ has operator norm at most
		\[ M \coloneqq \sup_{\gamma} \sum_{h \in \Gamma} c_h(\gamma) (3r)^{d(h, \gamma)} < \infty. \]
\end{lemma}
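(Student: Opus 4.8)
The approach is to view $\Theta$ as the infinite nonnegative ``matrix'' with $(u,v)$-entry $c_u(v)$ and to bound its operator norm with respect to $\norm{\cdot}$ by moving the weight $(3r)^{\abs{\cdot}}$ inside the sum. Concretely, for $\beta \in \RR^\Gamma$ with $\norm{\beta} < \infty$ and any $u \in \Gamma$,
\[
(3r)^{\abs{u}}\,\abs{\Theta\beta(u)} \;\le\; (3r)^{\abs{u}}\sum_{v\in\Gamma}\abs{\beta(v)}\,c_u(v) \;\le\; \norm{\beta}\sum_{v\in\Gamma}(3r)^{\abs{u}-\abs{v}}\,c_u(v),
\]
using $\abs{\beta(v)}\le\norm{\beta}\,(3r)^{-\abs{v}}$. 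In particular the series defining $\Theta\beta(u)$ converges absolutely (so $\Theta$ really does extend to a linear operator on $(\RR^\Gamma,\norm{\cdot})$), and taking the supremum over $u$ yields $\norm{\Theta}\le\sup_u\sum_v(3r)^{\abs{u}-\abs{v}}c_u(v)$.

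To identify the right-hand side with $M$, I would first apply the triangle inequality for the graph metric, $\abs{u}-\abs{v} = d(u,e)-d(v,e)\le d(u,v)$, to get $\norm{\Theta}\le\sup_u\sum_v(3r)^{d(u,v)}c_u(v)$. Then I would use that the model is translation-invariant: $h$ and $J$ do not depend on the vertex, and $\Gamma$ acts transitively on its own labelled, directed Cayley graph by automorphisms, so the rate kernels are covariant and $c_u(v)$ depends only on the relative position of $u$ and $v$. Consequently the weighted row-sum $\sum_v(3r)^{d(u,v)}c_u(v)$ is the same for every $u$, and equals the weighted column-sum $\sum_h(3r)^{d(h,\gamma)}c_h(\gamma)$, which is the same for every $\gamma$; this common value is precisely $M$, so $\norm{\Theta}\le M$.

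Finally, $M<\infty$: since the potential is nearest-neighbor, $c_h(\mb{y},\cdot)$ depends on $\mb{y}$ only through the coordinates in $\ball[\Gamma]{h}{1}$, a set of at most $2r+1$ vertices, so $c_h(\gamma)=0$ unless $d(h,\gamma)\le 1$; and $c_h(\gamma)\le 1$, being a supremum of total-variation distances between probability measures on $\A$. Hence for each fixed $\gamma$ the sum $\sum_h c_h(\gamma)(3r)^{d(h,\gamma)}$ has at most $2r+1$ nonzero terms, each at most $3r$, so $M\le 3r(2r+1)$. I expect the only genuine (and mild) subtlety to be the identification in the second paragraph: the $\norm{\cdot}$-estimate naturally produces the supremum of the weighted \emph{row}-sums of the kernel $c_u(v)$, whereas $M$ is defined via its weighted \emph{column}-sums, and the two coincide exactly because the kernel is translation-invariant; everything else is routine.
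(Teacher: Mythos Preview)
Your proof is correct and follows essentially the same route as the paper's: bound $(3r)^{\abs{\gamma}}\abs{\Theta\beta(\gamma)}$ by pulling out $\norm{\beta}$ and using the triangle inequality $\abs{\gamma}\le\abs{h}+d(h,\gamma)$, then control the remaining sum uniformly in $\gamma$ using that $c_h(\gamma)$ vanishes for non-adjacent $h,\gamma$. The paper's proof does not explicitly address the row-sum versus column-sum issue you raise (it simply writes $c_h(\gamma)$ where the definition of $\Theta$ would give $c_\gamma(h)$, implicitly relying on the translation invariance you spell out), and it uses the bound $c_h(\gamma)\le 2$ rather than your $\le 1$, a harmless difference of total-variation conventions.
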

\begin{proof}
	For any $\gamma \in \Gamma$,
	\begin{align*}
		[\Theta \beta](\gamma) \cdot (3r)^{\abs{\gamma}}
			&\leq \sum_{h \in \Gamma} \abs{\beta(h)} c_h(\gamma) (3r)^{\abs{\gamma}} \\
			&\leq \sum_{h \in \Gamma} \abs{\beta(h)} c_h(\gamma) (3r)^{\abs{h} + d(h, \gamma)} \\
			&\leq \norm{\beta} \sum_{h \in \Gamma} c_h(\gamma) (3r)^{d(h, \gamma)} ,
	\end{align*}
	so, taking the supremum over $\gamma$, we see that $\norm{\Theta \beta} \leq \norm{\beta} M$ and hence the operator norm is bounded by $M$.

	Finiteness of $M$ follows from the fact that always $c_h(\gamma) \leq 2$, and $c_h(\gamma) = 0$ if $h,\gamma$ are not adjacent. So for any $\gamma$
		\[ \sum_{h \in \Gamma} c_h(\gamma) (3r)^{d(h,\gamma)} \leq 2 \cdot 2r \cdot (3r)^1 = 12 r^2 . \qedhere \]
\end{proof}

We can now give a proof of Lemma \ref{lem:misc}:

\begin{proof}[Proof of Lemma \ref{lem:misc}]
By \cite[Theorem 3.9(c)]{liggett2005},
	\[ \Delta_{S^\Gamma(t) f} \leq \exp(t \Theta) \Delta_{f} \quad \forall f \in D(\A^\Gamma) . \]
Taking $\norm{\cdot}$ norms of both sides gives, by Lemma~\ref{lem:nnormlipbound},
	\[ \abs{S^\Gamma(t) f}_{\Lip} \leq \exp(Mt) \abs{f}_{\Lip} . \]
The result follows from this and the definition of $\bar{d}$.
\end{proof}

\begin{prop}
\label{prop:Lipschitzbound}
For all small enough $\lambda>0$, for all $g \in D(\A^\Gamma)$ we have
	\[ \abs{(I - \lambda\bar\Omega)^{-k}g}_{\Lip} \leq [1 - \lambda M]^{-k} \abs{g}_{\Lip} .\]
\end{prop}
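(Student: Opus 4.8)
The plan is to establish the case $k=1$ and then iterate. The two ingredients are the resolvent representation $(I-\lambda\bar\Omega)^{-1} = \int_0^\infty e^{-u}\, S^\Gamma(\lambda u)\,du$ (the standard formula for the contraction semigroup generated by $\bar\Omega$, rewritten from $(\lambda^{-1}I-\bar\Omega)^{-1} = \int_0^\infty e^{-t/\lambda}S^\Gamma(t)\,dt$ via the substitution $t=\lambda u$) and Liggett's inequality $\Delta_{S^\Gamma(t)f}\leq \exp(t\Theta)\Delta_f$ for $f\in D(\A^\Gamma)$ quoted above, together with the two facts already proved: $\abs{g}_{\Lip}=\norm{\Delta_g}$ (Lemma~\ref{lem:nnormlipbound}) and $\norm{\Theta}\leq M$ with respect to the weighted sup-norm $\norm{\cdot}$ on $\RR^\Gamma$.

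First I would dispose of the case $\abs{g}_{\Lip}=+\infty$, in which the inequality is vacuous; so assume $g$ is Lipschitz, hence $g\in D(\A^\Gamma)$ by the observation following Lemma~\ref{lem:nnormlipbound}, and fix $\lambda$ with $\lambda M<1$ (this is what ``small enough'' means). For a single application, put $f=(I-\lambda\bar\Omega)^{-1}g$. Evaluating the resolvent integral at two configurations $\eta_1,\eta_2$ that agree off a site $v$, moving the absolute value inside the integral, and taking the supremum over such pairs yields, for every $v\in\Gamma$,
\[ \Delta_f(v) \leq \int_0^\infty e^{-u}\,\Delta_{S^\Gamma(\lambda u)g}(v)\,du \leq \int_0^\infty e^{-u}\,\bigl[\exp(\lambda u\,\Theta)\Delta_g\bigr](v)\,du , \]
where the second step is Liggett's bound (legitimate since $\Theta$ has nonnegative entries, so $\exp(\lambda u\,\Theta)$ preserves the coordinatewise order). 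Applying $\norm{\cdot}$, which is monotone for that order, pulling it through the integral, and using $\norm{\exp(\lambda u\,\Theta)}\leq \exp(\lambda u M)$ gives $\norm{\Delta_f}\leq \norm{\Delta_g}\int_0^\infty e^{-(1-\lambda M)u}\,du = (1-\lambda M)^{-1}\norm{\Delta_g}$. By Lemma~\ref{lem:nnormlipbound} (applied to the continuous functions $f$ and $g$) this is precisely $\abs{(I-\lambda\bar\Omega)^{-1}g}_{\Lip}\leq (1-\lambda M)^{-1}\abs{g}_{\Lip}$; in particular $f$ is again Lipschitz, hence again in $D(\A^\Gamma)$, so this step may be repeated $k$ times to conclude.

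The analytic interchanges --- the resolvent formula, and commuting point evaluation and then $\norm{\cdot}$ with the integral --- are routine once $\lambda M<1$, since point evaluations are bounded linear functionals and the integrand is norm-continuous with $\int_0^\infty \norm{e^{-u}\exp(\lambda u\,\Theta)\Delta_g}\,du<\infty$; likewise the monotonicity of $\norm{\cdot}$ and the nonnegativity of $\Delta_f,\Delta_g$ and of $\exp(t\Theta)$ are immediate from the definitions. The one point that needs care --- and the only real obstacle --- is the bookkeeping for the induction: one must know that a single application of $(I-\lambda\bar\Omega)^{-1}$ returns a Lipschitz function (equivalently, an element of $D(\A^\Gamma)$ with finite $\abs{\cdot}_{\Lip}$), so that Liggett's inequality and Lemma~\ref{lem:nnormlipbound} can be applied again at the next stage; but this is exactly the content of the $k=1$ estimate, so the induction closes cleanly.
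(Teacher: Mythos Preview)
Your argument is correct, and the bookkeeping you flag (that $(I-\lambda\bar\Omega)^{-1}g$ is continuous, hence Lemma~\ref{lem:nnormlipbound} applies, and the $k=1$ estimate then closes the induction) is exactly what is needed.

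The paper takes a shorter but less self-contained route. Rather than passing through the semigroup via the resolvent integral, it quotes directly from the proof of \cite[Theorem~3.9]{liggett2005} the coordinatewise inequality
\[
	\Delta_{(I-\lambda\bar\Omega)^{-k}g} \leq [(1+\lambda\varepsilon)I - \lambda\Theta]^{-k}\Delta_g ,
\]
valid for all $k$ at once and for some $\varepsilon>0$; applying the norm $\norm{\cdot}$ and Lemma~\ref{lem:nnormlipbound} then yields $\abs{(I-\lambda\bar\Omega)^{-k}g}_{\Lip} \leq [1-\lambda(M-\varepsilon)]^{-k}\abs{g}_{\Lip}$, and dropping the positive $\varepsilon$ gives the result. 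So the paper imports a resolvent-level estimate, while you import only the semigroup-level estimate $\Delta_{S^\Gamma(t)f}\leq\exp(t\Theta)\Delta_f$ (already used in the proof of Lemma~\ref{lem:misc}) and rebuild the resolvent bound from it via $\int_0^\infty e^{-u}S^\Gamma(\lambda u)\,du$. Your approach is slightly longer but has the virtue of not appealing to any part of Liggett beyond what the paper has already invoked; the paper's approach is a one-liner but leans on an additional citation.
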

\begin{proof}
Recall from \cite[proof of Theorem 3.9]{liggett2005} that for all small enough $\lambda > 0$
	\[ \Delta_{(I - \lambda \bar\Omega)^{-k}g} \leq [(1+\lambda\varepsilon)I - \lambda \Theta]^{-k} \Delta_g \]
for any $g \in D(\A^\Gamma)$. If we apply the $\norm{\cdot}$ norm to both sides we get, by Lemma \ref{lem:nnormlipbound},
	\[ \abs{(I - \lambda\bar\Omega)^{-k}g}_{\Lip} \leq [1 - \lambda (M-\varepsilon)]^{-k} \abs{g}_{\Lip} .\]
The stated bound follows after dropping $\varepsilon$, which is positive.
\end{proof}

Define
	\[ \nnorm{f}_R = \sum_{\abs{\gamma} \geq R} \Delta_f (\gamma) . \]
If $f \in D(\A^\Gamma)$ then $\lim_{R \to \infty} \nnorm{f}_R = 0$; if $f$ is Lipschitz then for any $R \geq 0$
	\[ \nnorm{f}_R \leq 3 \abs{f}_{\Lip} (2/3)^{R} . \]
The following result establishes an approximate equivariance of $P_{\mb{x}}^\sigma$ with the generator:
\begin{prop}
\label{prop:Qequivariance}
	Let $V$ be a finite set and $\sigma \in \Hom(\Gamma, \Sym(V))$. For any $f \in D(\A^\Gamma)$, $R \in \NN$, and ${\mb{x}} \in \A^V$, 
		\[ \abs*{\Omega^\sigma P_{\mb{x}}^\sigma f - P_{\mb{x}}^\sigma \Omega^\Gamma f} \leq 3 \nnorm{f}_R + 2 \delta_R^\sigma \nnorm{f}. \]
\end{prop}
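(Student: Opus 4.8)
The plan is to expand both sides as averages over the ``recentering'' vertex $v \in V$ and then compare the two averages one recentering vertex at a time. Writing $\mb{y}_v \coloneqq \Pi_v^\sigma \mb{x}$, one has
\[ \Omega^\sigma P_{\mb{x}}^\sigma f = \frac{1}{\abs V} \sum_{v \in V} L_v, \qquad P_{\mb{x}}^\sigma \Omega^\Gamma f = \frac{1}{\abs V} \sum_{v \in V} R_v, \]
where $R_v = (\Omega^\Gamma f)(\mb{y}_v)$ and $L_v = \sum_{w \in V} \sum_{\ta \in \A} c_w(\mb{x}, \ta)\big[ f(\Pi_v^\sigma(\mb{x}^{w \to \ta})) - f(\mb{y}_v) \big]$; both series converge absolutely since $f \in D(\A^\Gamma)$. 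Using the estimate $\abs{f(\mb{z}) - f(\mb{z}')} \le \sum_{\gamma \st \mb{z}(\gamma) \ne \mb{z}'(\gamma)} \Delta_f(\gamma)$ from the proof of Lemma~\ref{lem:nnormlipbound}, together with $\sum_\ta c_w(\mb{x}, \ta) = 1$ and the fact that the fibers $\Gamma_{v,w} \coloneqq \{\gamma \in \Gamma \st \sigma^\gamma v = w\}$ partition $\Gamma$ as $w$ ranges over $V$, one gets the crude bounds $\abs{L_v}, \abs{R_v} \le \nnorm f$. Call $v$ \emph{good} if $\ball[\sigma]{v}{R} \cong \ball[\Gamma]{e}{R}$; there are at most $\delta_R^\sigma \abs V$ vertices that fail to be good, and each contributes at most $\abs{L_v} + \abs{R_v} \le 2 \nnorm f$ to $\frac{1}{\abs V}\sum_v \abs{L_v - R_v}$. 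It therefore remains to prove $\abs{L_v - R_v} \le 3 \nnorm f_R$ for every good $v$.

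So fix a good $v$. Two elementary facts drive the comparison. First, $\gamma \mapsto \sigma^\gamma v$ is a label- and direction-preserving graph morphism, so it sends $\ball[\Gamma]{e}{R}$ onto $\ball[\sigma]{v}{R}$; since these two balls are isomorphic they have the same finite number of vertices, and the map restricts to a \emph{bijection} $\ball[\Gamma]{e}{R} \to \ball[\sigma]{v}{R}$. Second, the local configuration of $\mb{y}_v$ around any $\gamma$ (its values at $\gamma$ and at the graph-neighbors of $\gamma$) coincides, under the label-preserving correspondence $\gamma' \mapsto \sigma^{\gamma'} v$, with the local configuration of $\mb{x}$ around $\sigma^\gamma v$; since $c_\gamma$ depends only on this local data, $c_\gamma(\mb{y}_v, \cdot) = c_{\sigma^\gamma v}(\mb{x}, \cdot)$ for every $\gamma$. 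Now group $L_v = \sum_w \Lambda_w$ with $\Lambda_w = \sum_\ta c_w(\mb{x}, \ta)[f(\Pi_v^\sigma(\mb{x}^{w \to \ta})) - f(\mb{y}_v)]$, which vanishes unless $\Gamma_{v,w} \ne \varnothing$. Call $w$ \emph{near} if $\Gamma_{v,w}$ contains some $\gamma$ with $\abs\gamma < R$; by the bijection this $\gamma(w)$ is then unique and every other element of $\Gamma_{v,w}$ has length $\ge R$. For near $w$ the configurations $\Pi_v^\sigma(\mb{x}^{w \to \ta})$ and $(\mb{y}_v)^{\gamma(w) \to \ta}$ differ only on $\Gamma_{v,w} \setminus \{\gamma(w)\}$, so by the two facts (equal prefactors $c_w(\mb{x},\cdot) = c_{\gamma(w)}(\mb{y}_v,\cdot)$, and the configuration bound) $\Lambda_w$ equals the $\gamma(w)$-term $\sum_\ta c_{\gamma(w)}(\mb{y}_v, \ta)[f((\mb{y}_v)^{\gamma(w) \to \ta}) - f(\mb{y}_v)]$ of $R_v$ up to an error bounded by $\sum_{\gamma \in \Gamma_{v,w},\, \abs\gamma \ge R} \Delta_f(\gamma)$; summing over near $w$ — as $w$ runs over near vertices, $\gamma(w)$ runs bijectively over $\{\abs\gamma < R\}$ — shows $\sum_{w \text{ near}} \Lambda_w$ equals the ``$\abs\gamma < R$'' part of $R_v$ with total error at most $\nnorm f_R$. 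For the remaining (``far'') $w$ every $\gamma \in \Gamma_{v,w}$ has length $\ge R$, so $\sum_{w \text{ far}} \abs{\Lambda_w} \le \nnorm f_R$, and the ``$\abs\gamma \ge R$'' part of $R_v$ is likewise bounded by $\nnorm f_R$. Adding these three pieces gives $\abs{L_v - R_v} \le 3 \nnorm f_R$.

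Combining the good vertices with the at most $\delta_R^\sigma \abs V$ non-good ones then yields $\abs{\Omega^\sigma P_{\mb{x}}^\sigma f - P_{\mb{x}}^\sigma \Omega^\Gamma f} \le 3\nnorm f_R + 2 \delta_R^\sigma \nnorm f$. The one genuine subtlety is the bookkeeping imposed by the fibers $\Gamma_{v,w}$: resampling the single site $\gamma \in \Gamma$ in the lift corresponds to simultaneously resampling all of $\Gamma_{v,w}$ with $w = \sigma^\gamma v$ downstairs, and conversely resampling $w \in V$ lifts to a simultaneous resampling of $\Gamma_{v,w}$, so the whole argument hinges on the fact that for good $v$ each fiber has at most one element of length $< R$ while everything else lands in the tail controlled by $\nnorm f_R$ — which is precisely what the bijection $\ball[\Gamma]{e}{R} \to \ball[\sigma]{v}{R}$ provides. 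I expect verifying this bijection, and keeping the ``$\le R$ versus $\ge R$'' accounting straight through the regrouping, to be the main work.
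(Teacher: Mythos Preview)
Your proof is correct and follows essentially the same approach as the paper's: both expand the two quantities as averages over the recentering vertex, use the fiber decomposition $\Gamma = \bigsqcup_w \Gamma_{v,w}$, split into near/far resampled vertices and good/bad recentering vertices, and obtain the identical three contributions of $\nnorm{f}_R$ plus the $2\delta_R^\sigma\nnorm{f}$ from bad centers. The only difference is organizational---you separate out the bad recentering vertices first and then do the near/far split for good ones, whereas the paper truncates to near pairs globally first and handles bad centers in the middle of the argument---but the bookkeeping and the bounds are the same.
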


\begin{proof}
	From the definitions of $\Omega^\sigma$ and $P_{\mb{x}}^\sigma$,
	\begin{align*}
		\Omega^\sigma P_{\mb{x}}^\sigma f
			&= \sum_{v \in V} \sum_{\ta \in \A} c_v(\mathbf{x}, \ta) \big[ P_{\mathbf{x}^{v \to \ta}}^\sigma f - P_{\mb{x}}^\sigma f \big] \\
			&= \frac{1}{\abs{V}} \sum_{v,w \in V} \sum_{\ta \in \A} c_v(\mathbf{x}, \ta) \big[ f(\Pi_w^\sigma \mb{x}^{v \to \ta}) - f(\Pi_w^\sigma \mathbf{x}) \big] .
	\end{align*}
	We can compare this to if the sum is restricted to pairs $v,w$ which are nearby in the graph $\sigma$:
	\begin{align*}
		&\hspace{-2cm} \abs*{\Omega^\sigma P_{\mb{x}}^\sigma f - \frac{1}{\abs{V}} \sum_{w \in V} \sum_{v \in \ball[\sigma]{w}{R-1}} \sum_{\ta \in \A} c_v(\mb{x}, \ta) \big[ f(\Pi_w^\sigma \mathbf{x}^{v \to \ta}) - f(\Pi_w^\sigma \mathbf{x}) \big] } \\
			&\leq \frac{1}{\abs{V}} \sum_{w \in V} \sum_{v \not\in \ball[\sigma]{w}{R-1}} \sum_{\ta \in \A} c_v(\mathbf{x}, \ta) \abs{ f(\Pi_w^\sigma \mathbf{x}^{v \to \ta}) - f(\Pi_w^\sigma \mathbf{x}) }
	\end{align*}
	Now since the labelings $\mb{x}^{v \to \ta}$ and $\mb{x}$ differ only at $v$, their lifted labelings $\Pi_w^\sigma \mb{x}^{v \to \ta}$ and $\Pi_w^\sigma \mb{x}$ differ only at preimages of $v$ under the map $\gamma \mapsto \sigma^\gamma w$. Let $\Pi_w^\sigma \{v\} \subset \Gamma$ denote the set of these preimages. Then the above is bounded by
		\[ \frac{1}{\abs{V}} \sum_{w \in V} \sum_{v \not\in \ball[\sigma]{w}{R-1}} \sum_{\ta \in \A} c_v(\mathbf{x}, \ta) \sum_{\gamma \in \Pi_w^\sigma \{v\}} \Delta_f(\gamma) = \frac{1}{\abs{V}} \sum_{w \in V} \sum_{v \not\in \ball[\sigma]{w}{R-1}} \sum_{\gamma \in \Pi_w^\sigma \{v\}} \Delta_f(\gamma) . \]
	Since for each $w$ the sets in the collection $\{ \Pi_w^\sigma \{v\} \st v \not\in \ball[\sigma]{w}{R-1}\}$ are disjoint and contained in the complement of $\ball[\Gamma]{e}{R-1}$, we can bound this by
			\[ \frac{1}{\abs{V}} \sum_{w \in V} \sum_{\gamma \not\in \ball[\Gamma]{e}{R-1}} \Delta_f(\gamma) = \nnorm{f}_R. \]
	
	Now suppose $w \in V$ is such that $\ball[\sigma]{w}{R} \cong \ball[\Gamma]{e}{R}$: then for each $v \in \ball[\sigma]{w}{R-1}$ the intersection $\ball[\Gamma]{e}{R-1} \cap \Pi_w^\sigma \{v\}$ consists of a single point, which we call $\gamma^v$. We then have $c_v(\mb{x}, \ta) = c_{\gamma^v}(\Pi_w^\sigma \mb{x}, \ta)$. From this we can get 
	\begin{align*}
		&\hspace{-2cm} \Bigg\lvert \sum_{v \in \ball[\sigma]{w}{R-1}} \sum_{\ta \in \A} c_v(\mathbf{x}, \ta) \big[ f(\Pi_w^\sigma \mb{x}^{v \to \ta}) - f(\Pi_w^\sigma \mb{x}) \big] \\[-1em]
			&\qquad - \sum_{\gamma \in \ball[\Gamma]{e}{R-1}} \sum_{\ta \in \A} c_{\gamma}(\Pi_w^\sigma \mb{x}, \ta) \big[ f\big( (\Pi_w^\sigma \mb{x})^{\gamma \to \ta}\big) - f\big(\Pi_w^\sigma \mb{x}\big) \big] \Bigg\rvert \tag{$\ast$} \\[0.5em]
		&= \abs*{\sum_{v \in \ball[\sigma]{w}{R-1}} \sum_{\ta \in \A} c_v(\mb{x}, \ta)[f(\Pi_w^\sigma \mb{x}^{v \to \ta}) - f\big( (\Pi_w^\sigma \mb{x})^{\gamma^v \to \ta}\big) ] } \\[0.5em]
		&\leq \sum_{v \in \ball[\sigma]{w}{R-1}} \sum_{\ta \in \A} c_v(\mb{x}, \ta) \abs*{ f(\Pi_w^\sigma \mb{x}^{v \to \ta}) - f\big( (\Pi_w^\sigma x)^{\gamma^v \to \ta}\big) }
	\intertext{Now our construction also guarantees that the labelings $\Pi_w^\sigma \mb{x}^{v \to \ta}$ and $(\Pi_w^\sigma \mb{x})^{\gamma^v \to \ta}$ differ only at sites in $\Pi_w^\sigma \{v\}$ other than $\gamma^v$, all of which lie outside $\ball[\Gamma]{e}{R-1}$. Therefore we can continue}
		(\ast) &\leq \sum_{v \in \ball[\sigma]{w}{R-1}} \sum_{\ta \in \A} c_v(\mb{x}, \ta) \sum_{\substack{\gamma \in \Pi_w^\sigma \{v\} \\ \abs{\gamma} \geq R}} \Delta_f(\gamma) \\
		&\leq \sum_{\abs{\gamma} \geq R} \Delta_f(\gamma) \\
			&= \nnorm{f}_R .
	\end{align*}
	In the second-to-last line, we have again used that $\Pi_w^\sigma\{v_1\}$ and $\Pi_w^\sigma \{v_2\}$ are disjoint if $v_1 \neq v_2$.
	
	For other `bad' $w$ where $\ball[\sigma]{w}{R} \not\cong \ball[\Gamma]{e}{R}$, approximating the sum over $v$ by the sum over $\gamma$ in this way may be inaccurate, but the fraction of $w\in V$ which are bad is only $\delta_R^\sigma$. For these $w$ we note that the magnitudes of both sums in ($\ast$) can be bounded by $\sum_{\gamma \in \Gamma} \Delta_f(\gamma) = \nnorm{f}$.
	
	So far we have shown that
		\[ \abs*{ \Omega^\sigma P_{\mb{x}}^\sigma f - \frac{1}{\abs{V}} \sum_{w \in V} \sum_{\abs{\gamma} < R} \sum_{\ta \in \A} c_{\gamma}(\Pi_w^\sigma \mb{x}, \ta) \big[ f\big( (\Pi_w^\sigma \mb{x})^{\gamma \to \ta}\big) - f\big(\Pi_w^\sigma \mb{x}\big) \big]} \leq 2 \nnorm{f}_R + 2 \delta_R^\sigma \nnorm{f} . \]
	To finish, we compare the second term on the left-hand side to $P_{\mb{x}}^\sigma \Omega^\Gamma f$ using an approach similar to above:
	\begin{align*}
		&\hspace{-2cm}\Bigg\lvert \frac{1}{\abs{V}} \sum_{w \in V} \sum_{\abs{\gamma} < R} \sum_{\ta \in \A} c_{\gamma}(\Pi_k^\sigma \mb{x}, \ta) \big[ f\big( (\Pi_k^\sigma \mb{x})^{\gamma \to \ta}\big) - f\big(\Pi_k^\sigma \mb{x}\big) \big] \\[-1em]
		&\qquad - \frac{1}{\abs{V}} \sum_{w \in V} \sum_{\gamma \in \Gamma} \sum_{\ta \in \A} c_{\gamma}(\Pi_w^\sigma \mb{x}, \ta) \big[ f\big( (\Pi_w^\sigma \mb{x})^{\gamma \to \ta}\big) - f\big(\Pi_w^\sigma \mb{x}\big) \big] \Bigg\rvert \\[0.5em]
		&\leq \frac{1}{\abs{V}} \sum_{w \in V} \sum_{\abs{\gamma} \geq R} \sum_{\ta \in \A} c_{\gamma}(\Pi_w^\sigma \mb{x}, \ta) \abs*{ f\big( (\Pi_w^\sigma \mb{x})^{\gamma \to \ta}\big) - f\big(\Pi_w^\sigma \mb{x}\big)} \\
		&\leq \frac{1}{\abs{V}} \sum_{w \in V} \sum_{\abs{\gamma} \geq R} \Delta_f(\gamma) \\
		&= \nnorm{f}_R. \qedhere
	\end{align*}
\end{proof}

\begin{lemma}
\label{lem:Sequivariance}
	For all small enough $\lambda>0$, for any $m \in \NN$ and $g \in \Lip(\A^\Gamma)$ we have
		\[ \norm{ (I - \lambda \Omega^\sigma)^{-m} P_{\mb{x}}^\sigma g - P_{\mb{x}}^\sigma (I - \lambda \bar\Omega^\Gamma)^{-m} g}_{\ell^\infty(\A^V)} \leq \lambda \Delta^\sigma \abs{g}_{\Lip} \sum_{k=1}^m (1-\lambda M)^{-k} . \]
\end{lemma}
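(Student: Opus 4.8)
The idea is a telescoping/resolvent-comparison argument: write the difference of the two iterated resolvents as a sum of $m$ ``one-step defect'' terms, control each defect by Proposition~\ref{prop:Qequivariance}, and control the intervening powers of the resolvents using the $\ell^\infty$-contractivity of $(I-\lambda\Omega^\sigma)^{-1}$ together with the Lipschitz bound of Proposition~\ref{prop:Lipschitzbound} for $(I-\lambda\bar\Omega^\Gamma)^{-1}$. Throughout, fix $\sigma$, write $A = I-\lambda\Omega^\sigma$ acting on $\ell^\infty(\A^V)=C(\A^V)$ and $B = I-\lambda\bar\Omega^\Gamma$ acting on $C(\A^\Gamma)$, and let $P\colon C(\A^\Gamma)\to \ell^\infty(\A^V)$ denote the linear map $f\mapsto(\mb{x}\mapsto P_{\mb{x}}^\sigma f)$, so the quantity to be bounded is $\norm{A^{-m}Pg - PB^{-m}g}_{\ell^\infty(\A^V)}$.

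First I would record the ingredients. (i) For all small enough $\lambda>0$, $A$ is invertible and $\norm{A^{-1}}_{\ell^\infty(\A^V)\to\ell^\infty(\A^V)}\le 1$: $\Omega^\sigma$ is the generator of a finite-state continuous-time Markov chain, so $e^{s\Omega^\sigma}$ is an $\ell^\infty$-contraction for every $s\ge 0$ and $A^{-1}=\lambda^{-1}\int_0^\infty e^{-s/\lambda}e^{s\Omega^\sigma}\,ds$ is an average of contractions (equivalently, the property $\norm{h}\le\norm{(I-\lambda\Omega^\sigma)h}$ holds here as in the infinitary case). (ii) Setting $g_j := B^{-j}g = (I-\lambda\bar\Omega^\Gamma)^{-j}g$, Proposition~\ref{prop:Lipschitzbound} gives $\abs{g_j}_{\Lip}\le (1-\lambda M)^{-j}\abs{g}_{\Lip}$; in particular each $g_j$ is Lipschitz, hence lies in $D(\A^\Gamma)$ by Lemma~\ref{lem:nnormlipbound}, so $\bar\Omega^\Gamma g_j=\Omega^\Gamma g_j$ and Proposition~\ref{prop:Qequivariance} applies to it. (iii) Combining Proposition~\ref{prop:Qequivariance} with $\nnorm{f}_R\le 3\abs{f}_{\Lip}(2/3)^R$ and $\nnorm{f}\le 3\abs{f}_{\Lip}$ (Lemma~\ref{lem:nnormlipbound}) and then taking the infimum over $R$, one gets for every Lipschitz $f$
\[ \norm{\Omega^\sigma Pf - P\Omega^\Gamma f}_{\ell^\infty(\A^V)} \;\le\; \abs{f}_{\Lip}\,\inf_R\big(9(2/3)^R + 6\delta_R^\sigma\big) \;=\; \Delta^\sigma\,\abs{f}_{\Lip}. \]

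Next I would carry out the telescoping. Using that $P$ is only ever composed on the outside, one has the operator identity
\[ A^{-m}P - PB^{-m} \;=\; \sum_{k=1}^m\big(A^{-k}PB^{-(m-k)} - A^{-(k-1)}PB^{-(m-k+1)}\big) \;=\; \sum_{k=1}^m A^{-k}\,(PB - AP)\,B^{-(m-k+1)}, \]
and since $PB - AP = P(I-\lambda\bar\Omega^\Gamma) - (I-\lambda\Omega^\sigma)P = \lambda(\Omega^\sigma P - P\bar\Omega^\Gamma)$, applying both sides to $g$ and using $\bar\Omega^\Gamma g_{m-k+1}=\Omega^\Gamma g_{m-k+1}$ from (ii) gives
\[ A^{-m}Pg - PB^{-m}g \;=\; \lambda\sum_{k=1}^m A^{-k}\big(\Omega^\sigma P - P\Omega^\Gamma\big)g_{m-k+1}. \]
Taking $\ell^\infty(\A^V)$ norms, then using $\norm{A^{-k}}_{\infty\to\infty}\le 1$ from (i), the defect bound (iii) applied with $f=g_{m-k+1}$, and $\abs{g_{m-k+1}}_{\Lip}\le (1-\lambda M)^{-(m-k+1)}\abs{g}_{\Lip}$ from (ii), yields
\[ \norm{A^{-m}Pg - PB^{-m}g}_{\ell^\infty(\A^V)} \;\le\; \lambda\,\Delta^\sigma\,\abs{g}_{\Lip}\sum_{k=1}^m (1-\lambda M)^{-(m-k+1)} \;=\; \lambda\,\Delta^\sigma\,\abs{g}_{\Lip}\sum_{k=1}^m (1-\lambda M)^{-k}, \]
which is the asserted inequality. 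Here ``small enough $\lambda$'' means small enough for Proposition~\ref{prop:Lipschitzbound} and with $\lambda M<1$.

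The only genuinely delicate points are bookkeeping ones: making sure that in the telescoping manipulation the non-invertible map $P$ between the two different function spaces is never inverted or moved across $A^{-1}$ or $B^{-1}$, and verifying that the finite-volume resolvent $(I-\lambda\Omega^\sigma)^{-1}$ is an $\ell^\infty$-contraction (which is the finite-state analogue of the contraction property already invoked for $\bar\Omega^\Gamma$). Everything else is a direct assembly of Proposition~\ref{prop:Qequivariance}, Proposition~\ref{prop:Lipschitzbound}, and Lemma~\ref{lem:nnormlipbound}, so I do not expect a serious obstacle beyond careful tracking of the powers $(1-\lambda M)^{-j}$.
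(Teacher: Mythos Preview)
Your proof is correct and follows essentially the same approach as the paper: the paper organizes the same resolvent comparison by induction on $m$, which upon unwinding is exactly your telescoping identity $A^{-m}P - PB^{-m}=\sum_{k=1}^m A^{-k}(PB-AP)B^{-(m-k+1)}$, and both arguments rely on the same three ingredients (Proposition~\ref{prop:Qequivariance} for the one-step defect, Proposition~\ref{prop:Lipschitzbound} for the Lipschitz growth of $B^{-j}g$, and the $\ell^\infty$-contractivity of $A^{-1}$). Your bookkeeping of where $P$ sits and why $g_j\in D(\A^\Gamma)$ is accurate.
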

\begin{proof}
	We use induction on $m$, starting with the base case $m=1$. Throughout, we assume $\lambda$ is small enough for Proposition \ref{prop:Lipschitzbound} to apply.

	Given $g \in \Lip(\A^\Gamma)$, let $f = (I-\lambda \bar\Omega^\Gamma)^{-1} g$.
	Then for any $R \in \NN$
	\begin{align*}
		\norm{P_{\mb{x}}^\sigma g - (I - \lambda \Omega^\sigma)[P_{\mb{x}}^\sigma f]}_{\ell^\infty(\A^V)}
			&= \norm{P_{\mb{x}}^\sigma [f - \lambda \bar\Omega^\Gamma f] - (I - \lambda \Omega^\sigma)[P_{\mb{x}}^\sigma f]}_{\ell^\infty(\A^V)} \\
			&= \lambda \norm{\Omega^\sigma P_{\mb{x}}^\sigma f - P_{\mb{x}}^\sigma \bar\Omega^\Gamma f}_{\ell^\infty(\A^V)} \\
			&\leq \lambda (3 \nnorm{f}_R + 2 \delta_R^\sigma \nnorm{f}) \tag{Prop. \ref{prop:Qequivariance}}\\
			&\leq \lambda (9 \cdot (2/3)^{R} + 6\delta_R^\sigma) \abs{f}_{\Lip}.
	\intertext{Taking the infimum over $R$ gives}
		\norm{P_{\mb{x}}^\sigma g - (I - \lambda \Omega^\sigma)[P_{\mb{x}}^\sigma f]}_{\ell^\infty(\A^V)}
			&\leq \lambda \Delta^\sigma \abs{f}_{\Lip} \\
			&\leq \lambda \Delta^\sigma (1-\lambda M)^{-1} \abs{g}_{\Lip} . \tag{Prop. \ref{prop:Lipschitzbound}}
	\end{align*}
	Since $(I - \lambda \Omega^\sigma)^{-1}$ is a contraction on $\ell^\infty(\A^V)$,
	\begin{align*}
		\norm{ (I - \lambda \Omega^\sigma)^{-1} P_{\mb{x}}^\sigma g - P_{\mb{x}}^\sigma (I - \lambda \bar\Omega^\Gamma)^{-1} g}_{\ell^\infty}
		&= \norm{(I - \lambda \Omega^\sigma)^{-1} \left[P_{\mb{x}}^\sigma g - (I - \lambda \Omega^\sigma)[P_{\mb{x}}^\sigma f] \right]}_{\ell^\infty} \\
		&\leq \lambda \Delta^\sigma (1-\lambda M)^{-1} \abs{g}_{\Lip} .
	\end{align*}
	This proves the base case.
	
	Now assuming the $m$ case and the base case, we prove the $m+1$ case:
	\begin{align*}
		&\hspace{-1cm}\norm{ (I - \lambda \Omega^\sigma)^{-(m+1)} P_{\mb{x}}^\sigma g - P_{\mb{x}}^\sigma (I - \lambda \bar\Omega^\Gamma)^{-(m+1)} g}_{\ell^\infty} \\
			&= \big\| (I - \lambda \Omega^\sigma)^{-1}\left[(I - \lambda \Omega^\sigma)^{-m} P_{\mb{x}}^\sigma g - P_{\mb{x}}^\sigma (I - \lambda \bar\Omega^\Gamma)^{-m} g\right] \\
			&\quad + \left[(I - \lambda \Omega^\sigma)^{-1} P_{\mb{x}}^\sigma (I - \lambda \bar\Omega^\Gamma)^{-m}g - P_{\mb{x}}^\sigma (I - \lambda \bar\Omega^\Gamma)^{-1}(I - \lambda \bar\Omega^\Gamma)^{-m} g\right] \big\|_{\ell^\infty} \\
		&\leq \norm{(I - \lambda \Omega^\sigma)^{-m} P_{\mb{x}}^\sigma g - P_{\mb{x}}^\sigma (I - \lambda \bar\Omega^\Gamma)^{-m} g}_{\infty} \tag{contraction}\\
		&\quad + \lambda \Delta^\sigma (1-\lambda M)^{-1} \abs{(I - \lambda \bar\Omega^\Gamma)^{-m}g}_{\Lip} \tag{base case}\\
		&\leq \lambda \Delta^\sigma \abs{g}_{\Lip} \sum_{k=1}^{m+1} (1-\lambda M)^{-k} . \tag{inductive hyp., Prop. \ref{prop:Lipschitzbound}}
	\end{align*}
	This completes the induction.
\end{proof}

\subsubsection*{Proof of Theorem \ref{thm:Sequivariance}}
	Given $g \in \Lip_1(\A^\Gamma)$, for all large enough $m$ we can apply the previous lemma with $\lambda = t/m$, which gives
		\[ \norm{ (I - \tfrac{t}{m} \Omega^\sigma)^{-m} P_{\mb{x}}^\sigma g - P_{\mb{x}}^\sigma (I - \tfrac{t}{m} \bar\Omega^\Gamma)^{-m} g}_{\ell^\infty(\A^V)} \leq \tfrac{t}{m} \Delta^\sigma \abs{g}_{\Lip} \sum_{k=1}^m (1-\tfrac{t}{m} M)^{-k} . \]
	Let $m \to \infty$. The left-hand side converges (by Hille-Yosida; \cite[Theorem 2.9(b)]{liggett2005}) to $\norm{S^\sigma(t) P_{\mb{x}}^\sigma g - P_{\mb{x}}^\sigma S^\Gamma(t) g}_\infty$ while the lim sup of the right-hand side is bounded by $\Delta^\sigma t e^{Mt} \abs{g}_{\Lip}$, since
		\[ \limsup_{m \to \infty} \frac{1}{m} \sum_{k=1}^{m} (1- \tfrac{t}{m} M)^{-k} \leq \limsup_{m \to \infty} \frac{1}{m} \sum_{k=1}^{m} (1- \tfrac{t}{k} M)^{-k} = e^{Mt}. \]
	Since $g \in \Lip(\A^\Gamma)$ was arbitrary, the inequality of transportation distance follows.

\bibliographystyle{alpha}
\bibliography{Metastability}

\end{document}